\theoremstyle{plain}
\theoremstyle{definition}
\newtheorem{theorem}{Theorem}[section]
\newtheorem{lemma}[theorem]{Lemma}
\newtheorem{definition}[theorem]{Definition}
\newtheorem{corollary}[theorem]{Corollary}
\newtheorem{proposition}[theorem]{Proposition}
\newtheorem{remark}[theorem]{Remark}
\newtheorem{convention}[theorem]{Convention}
\definecolor{orange}{RGB}{255,130,0}
\definecolor{maroon}{RGB}{115,0,45}
\definecolor{green}{RGB}{0,128,0}
\definecolor{lightblue}{RGB}{3,170,255}
\title[Primitivity index bounds in free groups, and the second Chebyshev function]{Primitivity index bounds in free groups, and the second Chebyshev function}
\author{Ilya Kapovich}
\address{Department of Mathematics and Statistics, Hunter College of CUNY\newline
  \indent 695 Park Ave, New York, NY 10065
  \newline \indent  {\url{http://math.hunter.cuny.edu/ilyakapo/}}}
  \email{\tt \href{mailto:ik535@hunter.cuny.edu}{ik535@hunter.cuny.edu}}
\author{Zachary Simon}
\address{Advanced Technology Center, Lockheed Martin Space\newline
  \indent 1111 Lockheed Martin Way, Sunnyvale, CA 94089}  
  \email{\tt \href{mailto:zacharygsimon@gmail.com}{zacharygsimon@gmail.com}}
\subjclass[2020]{Primary 20F65, Secondary 20F10, 20F67}
\keywords{Free groups, primitive elements, Chebyshev function}
\thanks{The first author was supported by the NSF grant DMS-1905641}
\begin{document}

\maketitle

\begin{abstract}
Motivated by results about ``untangling" closed curves on hyperbolic surfaces, Gupta and Kapovich introduced the primitivity and simplicity index functions for finitely generated free groups, $d_{prim}(g;F_N)$ and $d_{simp}(g;F_N)$, where $1\ne g\in F_N$, and obtained some upper and lower bounds for these functions. In this paper, we study the behavior of the sequence $d_{prim}(a^nb^n; F(a,b))$ as $n\to\infty$. Answering a question from~\cite{Kap19}, we prove that this sequence is unbounded and that for $n_i=lcm(1,2,\dots,i)$, we have $|d_{prim}(a^{n_i}b^{n_i}; F(a,b))-\log(n_i)|= o(\log(n_i))$. By contrast, we show that for all $n\ge 2$, one has $d_{simp}(a^nb^n; F(a,b))=2$.  In addition to topological and group-theoretic arguments, number-theoretic considerations, particularly the use of asymptotic properties of the second Chebyshev function, turn out to play a key role in the proofs.
\end{abstract}

\section{Introduction}

In recent years the study of quantitative aspects of residual finiteness for various classes of finitely generated groups has become an active theme in geometric group theory. See \cite{GK14, BBRKM17, BR10, BRC16, BRHP15, BRK12, BRMR10, BRMR11, BRMR15, BRM17, BRSe16, BRSt16, Bu09, GV10, KM11, KT16, Pa14, Ri12}.
The topic is closely related to topological and geometric results about ``untangling" closed curves on hyperbolic surfaces. A classic result of Scott in \cite{Sc85} from the 1980s showed that if $\Sigma$ is a closed hyperbolic surface with a hyperbolic metric $\rho$, and $\gamma$ is an essential closed geodesic on $\Sigma$, then $\gamma$ lifts to a simple closed geodesic $\widehat \gamma$ in some finite cover $\widehat \Sigma$ of $\Sigma$. Scott's proof exploits subgroup separability of the fundamental group $\pi_1(\Sigma)$ of $\Sigma$, which is a stronger form of residual finiteness. More recently, Patel \cite{Pa14} proved that in the context of Scott's theorem, one can bound the degree $d$ of the cover $\widehat \Sigma$ of $\Sigma$ from above by $C\ell_\rho(\gamma)$, where $C=C(\Sigma,\rho)>0$ is some constant independent of $\gamma$. One can then define the \emph{untangling degree} $\deg_{\Sigma,\rho}(\gamma)$ as the smallest degree $d$ of a finite cover of $\Sigma$ to which $\gamma$ lifts or ``untangles" as a closed geodesic. Using this quantity, one then defines the ``worst-case'' function $f_{\Sigma,\rho}(L)$ as the maximum of $\deg_{\Sigma,\rho}(\gamma)$ where $\gamma$ varies over all essential closed geodesics of length $\le L$. (Here $L$ needs to be assumed $\ge sys(\Sigma,\rho)$, the length of the shortest essential closed geodesic on $(\Sigma,\rho)$.) Patel's result can now be restated as saying that  $f_{\Sigma,\rho}(L)\le CL$ for all $L\ge sys(\Sigma,\rho)$.  Similar inequalities, for similarly defined quantities, hold for more general types of finite type hyperbolic surfaces. Moreover, a simple closed curve on a surface is a special case of a non-filling closed curve. Thus, stated again for a closed hyperbolic surface $(\Sigma,\rho)$, and an essential closed geodesic $\gamma$ on $\Sigma$, one can define $\deg_{\Sigma,\rho}^{fill}(\gamma)$ as the smallest degree of a finite cover $\widehat \Sigma$ of $\Sigma$ to which $\gamma$ lifts as a non-filling curve in $\widehat \Sigma$. This notion leads to a similarly defined worst-case function $f_{\Sigma,\rho}^{fill}(L)$. By definition, one has $\deg_{\Sigma,\rho}^{fill}(\gamma)\le \deg_{\Sigma,\rho}(\gamma)$ and $f_{\Sigma,\rho}^{fill}(L)\le f_{\Sigma,\rho}^{fill}(L)$. These quantities were formally introduced in \cite{GK14}, and we refer the reader there for a more detailed discussion.

Motivated by the case of hyperbolic surfaces, Gupta and Kapovich \cite{GK14} introduced similar notions for finite rank free groups $F_N=F(A)$, where $N\ge 2$ and $A=\{a_1,\dots, a_N\}$. For $g\in F_N$, we denote by $|g|_A$ and by $||g||_A$ the freely reduced length and the cyclically reduced length of $g$ with respect to $A$ accordingly.

Marshall Hall's theorem in \cite{Ha49} easily implies that for every $1\ne g\in F_N$, there exists a subgroup $H\le F_N$ of finite index such that $g\in H$ and such that $g$ is \emph{primitive} in $H$, that is, $g$ belongs to some free basis of $H$. Moreover, the Stallings subgroup graphs proof in \cite{St83} of the Marshall Hall Theorem implies that one can always find such an $H$ with $[F_N:H]\le ||g||_A$. In a nonabelian free group $U$, a primitive element is a special example of a ``simple element.'' Here an element $1\ne g\in U$ is called \emph{simple} if there exists a free product decomposition $U=U_1\ast U_2$ with $U_1\ne 1, U_2\ne 1$ such that $g\in U_1$. For $1\ne g\in F_N$ one then defines the \emph{primitivity index} $d_{prim}(g;F_N)$ as the smallest index $[F_N:H]$ of a subgroup $H\le F_N$ such that $g\in H$ and that $g$ is primitive in $H$. Similarly, for $1\ne g\in F_N$ one defines the \emph{simplicity index} $d_{simp}(g;F_N)$ as the smallest index  $[F_N:H]$ of a subgroup $H\le F_N$ such that $g\in H$ and that $g$ is simple in $H$. Using these indices,  \cite{GK14} then defined the corresponding worst-case functions, the \emph{primitivity index function} $f_{prim}(n;F_N)$ and the the \emph{simplicity index function} $f_{prim}(n;F_N)$. We discuss  some properties of these functions further below. In particular, as shown in \cite{GK14}, for every $1\ne g\in F_N=F(A)$, one has 
\[
d_{simp}(g;F_N)\le d_{prim}(g;F_N)\le ||g||_A\le |g|_A.
\]
In the appendix to \cite{GK14},  deploying a connection with the residual finiteness growth function for $F_N$, Bou-Rabee obtained a lower bound for $f_{prim}(n;F_N)$ that grows essentially as $n^{1/4}$. Moreover, he showed that modulo a conjecture of Babai in finite group theory, one gets a lower bound for $f_{prim}(n;F_N)$ that is slightly sublinear in $n$. Gupta and Kapovich also obtained a lower bound of $C\frac{\log(n)}{\log \log(n)}$ as $n\to\infty$ for $d_{simp}(n,F_N)$. 

These bounds rely on highly indirect non-constructive  arguments. In practice, understanding the properties of  $d_{prim}(g_n;F_N)$ for explicit sequences of elements $g_n\in F_N$ with $||g_n||_A$ growing linearly in $n$ is quite hard, and in the examples that have been analyzed $d_{prim}(g_n;F_N)$ is either bounded above by a constant or has linear growth in $n$ itself. In particular, there have been no known examples of this type where $d_{prim}(g_n;F_N)$ is an unbounded sequence that grows sublinearly.  

In the present paper, we produce the first example of a sequence of elements in $F_2=F(a,b)$ that exhibits such new behavior. The main family of words we consider in this paper is $w_n=a^n b^n\in F_2=F(a,b)$ where $n\ge 1$. 

For this family, we obtain the following bounds from Theorem \ref{thm1} and Theorem \ref{thm2}, respectively:

\begin{theorem}\label{t:main2}
There exists a constant $C\ge 0$ such that the following hold:
\begin{enumerate}
\item[(a)] For all integers $n\ge 1$, we have
\[
d_{prim}(a^n b^n; F_2)\le \log(n)+C.
\]

\item[(b)] For all integers $i\ge 1$, put $n_i=lcm(1,2,\dots,i)$. Then for all $i\ge 1$, we have
\[
d_{prim}(a^{n_i} b^{n_i}; F_2)\ge \log(n_i)-o(\log(n_i)).
\]
\end{enumerate}
\end{theorem}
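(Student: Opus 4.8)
The plan is to obtain the two bounds from sharper arithmetic estimates for $d_{prim}(a^nb^n;F_2)$ — our Theorems~\ref{thm1} and~\ref{thm2} — and then to pass to the stated form via elementary properties of the second Chebyshev function $\psi(x)=\sum_{p^k\le x}\log p=\log\operatorname{lcm}(1,\dots,\lfloor x\rfloor)$, chiefly $\psi(x)\sim x$ and its corollary that the least prime power exceeding $x$ is $x+o(x)$. For part (a) I will prove $d_{prim}(a^nb^n;F_2)\le\kappa(n)$, where $\kappa(n)$ is the least integer $\ge2$ with $\kappa(n)\nmid n$; since $\operatorname{lcm}(1,\dots,\kappa(n)-1)\mid n$ this gives $\psi(\kappa(n)-1)\le\log n$, whence $\kappa(n)\le\log n+C$ (and for the special values $\kappa(n_i)$ is the least prime power $>i$, which is $\log n_i+o(\log n_i)$). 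For part (b) I will show that an index‑$m$ subgroup $H$ with $a^{n_i}b^{n_i}$ primitive must satisfy $m\ge i+1$, and then $i+1\ge\psi(i)-o(\psi(i))=\log n_i-o(\log n_i)$.

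For part (a), given $n$ set $k=\kappa(n)$ and $H=\ker\bigl(F_2\to\mathbb Z/k\mathbb Z,\ a\mapsto1,\ b\mapsto-1\bigr)$, of index $k$; its coset graph $\Gamma$ has vertices $0,\dots,k-1$ with an $a$-edge $i\to i+1$ and a $b$-edge $i\to i-1$ for each $i$ (mod $k$). Write $n=qk+r$ with $0<r<k$ (possible as $k\nmid n$). Then $a^nb^n$ lifts to the reduced loop that runs $q$ full times around the $a$-cycle, then $r$ further $a$-steps (to the vertex $r$), then $r$ $b$-steps back to $0$, then $q$ full times around the $b$-cycle. Taking the spanning tree of $\Gamma$ to be the consecutive $a$-edges $0\to1\to\cdots\to k-1$, the resulting free basis of $H$ consists of $z_0$ (the loop around the $a$-cycle, representing $a^k$) together with one generator $z_j^{(b)}$ for each $b$-edge, and reading off the loop gives
\[
a^nb^n\;=\;z_0^{\,q}\cdot\bigl(z_r^{(b)}z_{r-1}^{(b)}\cdots z_1^{(b)}\bigr)\cdot\bigl(z_0^{(b)}z_{k-1}^{(b)}z_{k-2}^{(b)}\cdots z_1^{(b)}\bigr)^{q}.
\]
Write $A$ for the middle factor and $u$ for the base of the final power. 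Two Nielsen substitutions — first replace $z_0^{(b)}$ by $u$, then $z_r^{(b)}$ by $A$ — produce a free basis of $H$ containing $z_0,A,u$, in which $a^nb^n=z_0^{\,q}Au^{\,q}$. Since $\{z_0^{\,q}Au^{\,q},\,z_0,\,u\}$ again generates the free factor $\langle z_0,A,u\rangle\cong F_3$ (one recovers $A=z_0^{-q}\,(z_0^{\,q}Au^{\,q})\,u^{-q}$), the element $z_0^{\,q}Au^{\,q}$ is primitive in $\langle z_0,A,u\rangle$, hence in $H$, so $d_{prim}(a^nb^n;F_2)\le k=\kappa(n)$. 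The hard part is exactly this last step: the abelianization of $a^nb^n$ in $H$ is primitive for a cheap reason, but that is not enough (already $z_1^pz_2^q$ with $p,q\ge2$ has primitive abelianization in $F_2$ while being imprimitive), so one genuinely has to identify the loop word and reduce it to the recognizable normal form $z_0^{\,q}Au^{\,q}$.

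For part (b), let $H\le F_2$ have index $m$ with $a^{n_i}b^{n_i}$ primitive in $H$, and let $\sigma_a,\sigma_b$ be the permutations $a,b$ induce on $F_2/H$. If $m\le i$, every cycle length of $\sigma_a$ and of $\sigma_b$ lies in $\{1,\dots,i\}$, hence divides $n_i=\operatorname{lcm}(1,\dots,i)$, so $\sigma_a^{\,n_i}=\sigma_b^{\,n_i}=\mathrm{id}$ and $a^{n_i},b^{n_i}$ lie in the normal core of $H$. Then, in the coset graph $\Gamma$ (which has $m$ vertices), the lift of $a^{n_i}$ at the basepoint is a loop winding $n_i/\ell_C$ times around the $a$-cycle $C$ through the basepoint ($\ell_C\le m$), and the lift of $b^{n_i}$ winds $n_i/\ell_D$ times around the $b$-cycle $D$ ($\ell_D\le m$); hence the class of $a^{n_i}b^{n_i}$ in $H^{\mathrm{ab}}\cong H_1(\Gamma;\mathbb Z)\subseteq\mathbb Z^{E(\Gamma)}$ equals $\tfrac{n_i}{\ell_C}[C]+\tfrac{n_i}{\ell_D}[D]$, the gcd of whose coordinates is $\gcd\!\bigl(\tfrac{n_i}{\ell_C},\tfrac{n_i}{\ell_D}\bigr)=\tfrac{n_i}{\operatorname{lcm}(\ell_C,\ell_D)}\ge\tfrac{n_i}{m^2}\ge\tfrac{n_i}{i^2}>1$ for $i$ large (as $n_i=e^{\psi(i)}$ with $\psi(i)\sim i\gg2\log i$). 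So that class is not primitive in $H^{\mathrm{ab}}$, contradicting primitivity of $a^{n_i}b^{n_i}$ in $H$; therefore $m\ge i+1$, and since $\log n_i=\psi(i)=i+o(i)$ we get $d_{prim}(a^{n_i}b^{n_i};F_2)\ge i+1\ge\log n_i-o(\log n_i)$. The step to get right here is the homological bookkeeping — pinning the class of $a^{n_i}b^{n_i}$ in $H^{\mathrm{ab}}$ down exactly — which becomes transparent precisely because a small index pushes $a^{n_i},b^{n_i}$ into the normal core; running the same argument for all $m$ below the least prime power exceeding $i$ (rather than just $m\le i$) sharpens the lower bound so that it matches the upper one.
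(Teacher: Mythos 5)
Your argument is correct, and it splits naturally into a part that mirrors the paper and a part that does not. For part (a) you construct exactly the subgroup the paper uses, namely $H=\ker\bigl(F_2\to\mathbb Z/d\mathbb Z,\ a\mapsto 1,\ b\mapsto -1\bigr)$ with $d$ the least non-divisor of $n$, and your Nielsen moves produce the same distinguished basis elements $a^d$, $a^rb^r$, $b^d$ as the paper's Lemma~\ref{lem:1}, after which primitivity follows because the middle generator occurs exactly once; this is the paper's proof of Theorem~\ref{thm1} in only slightly different packaging. For part (b) you genuinely diverge. The paper first proves (Lemma~\ref{power}) that any finite-index $H$ admits a free basis containing both $a^k$ and $b^l$ for the minimal powers lying in $H$, writes $a^{n_i}b^{n_i}=(a^k)^p(b^l)^q$ with $p,q\ge 2$, and then invokes the Whitehead--Stallings cut-vertex criterion (Corollary~\ref{cor2} and Proposition~\ref{p:pr}) to rule out primitivity --- an argument that works for every $i\ge 3$ and in fact rules out simplicity, not just primitivity, of $(a^k)^p(b^l)^q$ inside the rank-two free factor. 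You instead pass to $H^{\mathrm{ab}}\cong H_1(\Gamma;\mathbb Z)$ and observe that the class of $a^{n_i}b^{n_i}$ is $\tfrac{n_i}{\ell_C}[C]+\tfrac{n_i}{\ell_D}[D]$, which is divisible by $\gcd(n_i/\ell_C,n_i/\ell_D)=n_i/\operatorname{lcm}(\ell_C,\ell_D)\ge n_i/i^2>1$ for large $i$; since $H_1(\Gamma;\mathbb Z)$ is a direct summand of the edge chain group, divisibility there forces divisibility in $H^{\mathrm{ab}}$, so the class is not unimodular and the element cannot be primitive. This trades the Whitehead-graph machinery for elementary linear algebra, which is a real simplification; what it costs is that the divisibility is only forced for $i$ large enough that $n_i>i^2$ (true for $i\ge 5$, and harmless for the asymptotic statement), and that the method sees only primitivity, so it could not recover the paper's stronger non-simplicity conclusion used elsewhere (e.g., in the proof that $d_{simp}(a^nb^n;F_2)=2$). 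Both routes end with the same Chebyshev-function estimate $\log n_i=\psi(i)=i+o(i)$ to convert $m\ge i+1$ into the stated lower bound.
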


Theorem~\ref{t:main2} directly implies the following:

\begin{corollary}\label{cor:main3}
Let $C\ge 0$ be the constant provided by Theorem~\ref{t:main2}. For $i=1,2,3,\dots$, put $n_i=lcm(1,2,3,\dots, i)$.

Then for all $i\ge 1$, we have
\[
\log(n_{i})-o(\log(n_i))\le d_{prim}(a^{n_i} b^{n_i}; F_2)\le \log(n_{i})+C.
\]
\end{corollary}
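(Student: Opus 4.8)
The plan is short, because Corollary~\ref{cor:main3} is an immediate consequence of Theorem~\ref{t:main2}, which we are entitled to assume: all the work has already been done there. For the upper bound, apply Theorem~\ref{t:main2}(a) with $n=n_i=\mathrm{lcm}(1,2,\dots,i)$. Part (a) is asserted for \emph{every} integer $n\ge 1$, so in particular it applies to $n=n_i$ and gives $d_{prim}(a^{n_i}b^{n_i};F_2)\le \log(n_i)+C$, where $C\ge 0$ is the constant furnished by the theorem. For the lower bound, Theorem~\ref{t:main2}(b) is already stated for $n_i$ and gives $d_{prim}(a^{n_i}b^{n_i};F_2)\ge \log(n_i)-o(\log(n_i))$. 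Concatenating these two inequalities yields exactly the displayed chain $\log(n_i)-o(\log(n_i))\le d_{prim}(a^{n_i}b^{n_i};F_2)\le \log(n_i)+C$, with the same constant $C$. So there is no genuine obstacle at the level of the corollary; it is a bookkeeping step that packages the two halves of Theorem~\ref{t:main2}.

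It is worth noting why the statement is isolated as a corollary. Let $\psi$ denote the second Chebyshev function. One has the elementary identity $\log n_i=\log\mathrm{lcm}(1,2,\dots,i)=\psi(i)$ (with $\log$ the natural logarithm), and by the Prime Number Theorem $\psi(i)\sim i\to\infty$. Hence the corollary shows at once that the sequence $d_{prim}(a^{n_i}b^{n_i};F_2)$ is unbounded---answering the question from~\cite{Kap19}---and, more precisely, that $|d_{prim}(a^{n_i}b^{n_i};F_2)-\log(n_i)|=o(\log(n_i))$, so along this subsequence the primitivity index grows like $\log(n_i)$, i.e.\ strictly sublinearly in $n_i$. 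This is the ``new behavior'' advertised in the introduction: a natural family of words of linearly growing (cyclically reduced) length whose primitivity index is unbounded but grows sublinearly.

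Since the corollary itself is immediate, the real obstacles live in Theorem~\ref{t:main2}, and a reader should expect the following shape of argument there. For (a) one would proceed constructively: for each $n$, exhibit a connected finite cover of a wedge of two circles of degree $O(\log n)$---equivalently, choose permutations $\sigma_a,\sigma_b$ of a fiber of size $O(\log n)$ acting transitively---such that the lift of $a^nb^n$ closes up to a loop which, read off a Stallings core graph (or via Reidemeister--Schreier), belongs to a free basis of $\pi_1$ of the cover; the size of the fiber is what produces the $\log n$ term and determines the constant $C$. For (b) one needs a lower bound, i.e.\ to rule out \emph{every} subgroup $H\le F_2$ of small index from containing $a^{n_i}b^{n_i}$ as a primitive element; the natural route is to analyze the possible folded/Stallings graphs of a degree-$d$ cover in which $a^{n_i}b^{n_i}$ lifts to a primitive loop and to show that $d$ must be large enough to ``resolve'' all the prime-power divisors of $n_i$, so that $d\ge \psi(i)-o(\psi(i))=\log(n_i)-o(\log(n_i))$ because $n_i=\mathrm{lcm}(1,\dots,i)$ carries every prime power $\le i$. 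I expect this lower bound in (b)---and in particular pinning down the precise link between the index of $H$ and the divisibility structure of $n$---to be the crux, with the asymptotic input $\psi(i)\sim i$ entering only as a clean final step.
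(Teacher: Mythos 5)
Your proof is correct and matches the paper exactly: the paper states that Corollary~\ref{cor:main3} follows directly from Theorem~\ref{t:main2} and gives no separate argument, which is precisely your observation that one simply applies part (a) with $n=n_i$ and concatenates with part (b). The additional commentary on the structure of the proof of Theorem~\ref{t:main2} is not needed for the corollary but is broadly consistent with what the paper actually does.
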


Corollary~\ref{cor:main3} answers, in the negative, the question raised in \cite{Kap19} as to whether the sequence $d_{prim}(a^nb^n;F_2), n\ge 1$ is bounded. Corollary~\ref{cor:main3} shows that for the sequence $w_{n_i}=a^{n_i}b^{n_i}\in F(a,b)$ as above with $||w_{n_i}||=2n_i$, we have $\big|d_{prim}(w_{n_i}; F_2)-\log(n_i)\big|= o(\log(n_i))$. This result provides the first explicit example of a sequence of cyclically reduced words whose length grows linearly but whose primitivity index function is unbounded and sublinear. Moreover, in this situation $d_{prim}(w_{n_i}; F_2)$ is computed almost precisely, up to a relatively small additive error, which in earlier known examples only happened in rather trivial cases.

By contrast, it turns out that the sequence $d_{simp}(w_n, F_2)$ is bounded and in fact constant:

\begin{theorem}\label{t:main4}
For all integers $n\ge 2$, we have
\[
d_{simp}(a^n b^n; F_2)=2.
\]
\end{theorem}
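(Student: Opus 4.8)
The plan is to establish the two inequalities $d_{simp}(a^nb^n;F_2)\ge 2$ and $d_{simp}(a^nb^n;F_2)\le 2$ separately; throughout write $w_n:=a^nb^n$.

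\emph{Lower bound.} I would show that $w_n$ is not simple in $F_2$ itself, so that $d_{simp}(w_n;F_2)\ne 1$. Suppose $F_2=U_1\ast U_2$ with $U_1,U_2\ne 1$ and $w_n\in U_1$. By Grushko's theorem $\operatorname{rank}(U_1)+\operatorname{rank}(U_2)=2$, so each $U_i$ is infinite cyclic; writing $U_1=\langle u\rangle$ and $U_2=\langle u'\rangle$, the set $\{u,u'\}$ is a free basis of $F_2$, hence $u$ is primitive in $F_2$. Then $w_n=u^k$ for some $k\in\mathbb Z$. But $a^nb^n$ is not a proper power in $F_2$ (its cyclic word contains a single maximal $a$-run, so its minimal period is $2n$), forcing $k=\pm1$ and making $w_n$ primitive in $F_2$ — which contradicts the fact that the abelianization of $w_n$ is $(n,n)\in\mathbb Z^2$, not a primitive vector for $n\ge 2$. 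Hence $d_{simp}(w_n;F_2)\ge 2$.

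\emph{Upper bound.} I would produce an explicit index-two subgroup in which $w_n$ becomes simple. Let $\phi\colon F_2\to\mathbb Z/2\mathbb Z$ be the homomorphism with $\phi(a)=\phi(b)=\bar 1$ and set $H=\ker\phi$. Then $[F_2:H]=2$ and $\phi(w_n)=\overline{2n}=\bar 0$, so $w_n\in H$ for every $n$. Reading off a spanning tree of the (rank-three) Stallings graph of $H$ — or just invoking Nielsen–Schreier — gives a free basis $\{x,y,z\}$ of $H$ with $x=ba^{-1}$, $y=a^2$, $z=ab$; in particular $b^2=xz$, whence
\[
a^{2m}b^{2m}=y^m(xz)^m,\qquad a^{2m+1}b^{2m+1}=y^m\,z\,(xz)^m .
\]
For even $n=2m$, the Nielsen automorphism of $H$ that replaces $x$ by $t:=xz$ and fixes $y,z$ turns $w_n$ into $y^m t^m\in\langle y,t\rangle$, a proper free factor of $H=\langle y,t\rangle\ast\langle z\rangle$, so $w_n$ is simple in $H$ (and for $m\ge2$ it is genuinely non-primitive there, which is why one needs simplicity rather than primitivity). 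For odd $n=2m+1$, using the identity $z(xz)^m=(zx)^m z$ and the Nielsen automorphism replacing $x$ by $s:=zx$, $w_n$ becomes $y^m s^m z$; then the transvection fixing $y,s$ and sending $z\mapsto y^m s^m z$ is an automorphism of $H$ carrying the basis element $z$ onto $w_n$, so $w_n$ is in fact primitive, hence simple, in $H$. Either way $d_{simp}(w_n;F_2)\le[F_2:H]=2$, and combining with the lower bound gives $d_{simp}(a^nb^n;F_2)=2$.

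\emph{Main obstacle.} The substantive part is the upper bound: one must choose the free basis of $H$ coming from a convenient spanning tree of its Stallings graph so that $a^nb^n$ rewrites transparently in terms of that basis, and then spot the right short sequence of elementary Nielsen moves — together with the parity split — that pushes $w_n$ into a proper free factor of $H$. The group-theoretic inputs (Grushko's theorem, Nielsen–Schreier, and detection of primitivity in $F_2$ via the abelianization) are all standard.
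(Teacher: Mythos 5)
Your proof is correct, and the upper bound is essentially the paper's argument in light disguise: your $H=\ker(\phi)$ with $\phi(a)=\phi(b)=\bar 1$ is exactly the index-two subgroup the paper builds in Lemma~\ref{lem:1} for $d=2$ (there $\phi(b)=[-1]_2=[1]_2$), and your Schreier basis $\{ba^{-1},a^2,ab\}$ is Nielsen-equivalent to the paper's $\{a^2,ab,b^2\}$ via $b^2=(ba^{-1})(ab)$; after your Nielsen moves the two parity cases land in the same places (even $n$: $w_n\in\langle a^2,b^2\rangle$, a proper free factor; odd $n$: $w_n$ primitive because the basis element $ab$, your $z$, occurs exactly once). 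Where you genuinely diverge is the lower bound. The paper invokes the Whitehead--Stallings cut-vertex criterion (Proposition~\ref{prop5} via Corollary~\ref{cor2}): the Whitehead graph of $a^nb^n$ is a $4$-cycle with no cut vertex, so $w_n$ is not simple in $F_2$. You instead argue from first principles: Grushko forces both free factors of any nontrivial splitting of $F_2$ to be infinite cyclic, so $w_n$ would be a power of a primitive element; since $a^nb^n$ is not a proper power, it would itself be primitive, contradicting the fact that its abelianization $(n,n)$ is not a primitive vector of $\mathbb Z^2$. Your route is more elementary (no Whitehead-graph machinery) but is special to rank two, whereas the paper's cut-vertex argument handles $a_1^{k_1}\cdots a_N^{k_N}$ in $F_N$ for all $N$ at no extra cost. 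Both are complete; just make sure you record the (standard but necessary) facts that a cyclically reduced proper power has a periodic cyclic word, which rules out $a^nb^n$, and that primitive elements of $F_2$ abelianize to primitive vectors.
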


Theorem~4.14 in \cite{GK14} provides an algorithm for computing $d_{prim}(g)$ and $d_{simp}(g)$ for $1\ne g\in F_N$. However, that algorithm involves some costly enumeration procedures that make it non-practical. Moreover, the main results of \cite{GK14} suggest that precisely computing $d_{prim}(g)$ and $d_{simp}(g)$ is difficult even for "random" elements in $F_N$. Thus computing $d_{prim}(g)$ and $d_{simp}(g)$ is generally difficult in practice, except for some special algebraic circumstances. For example, with a bit of work one can show directly that $d_{prim}(a^3 b^3; F_2)=3$. However, say, computing $d_{prim}(a^5 b^5; F_2)$ already appears to be hard to do by hand. Obtaining more precise information about $d_{prim}(a^nb^n; F_2)$ than that provided by Theorem~\ref{t:main2} also appears to be a difficult but interesting task.

As noted above, most previous proofs, both for free groups and for surfaces, for lower bounds of the index and degree functions of the type discussed in this paper involved rather indirect and implicit arguments. The one exception was provided by a paper of Gaster \cite{Ga16} where he used an explicit sequence of curves $\gamma_n$ on $\Sigma$ to prove that $\displaystyle f_{\Sigma,\rho}(L)\ge_{L\to\infty} c_0 L$.

The proofs of the main results in this paper deploy a combination of topological, group-theoretic, and number-theoretic methods. The connection with number theory comes from the following fact, see Lemma~\ref{lem2} below, whose proof uses basic known properties of the second Chebyshev function. Let $n\ge 3$ be an integer and let $d=d(n)\ge 2$ be the smallest positive integer such that $d\nmid n$. Then $d(n)\le \log(n)+C$ for some constant $C$. Moreover, if $n_i=lcm(1,\dots, i)$ then $d(n_i)\ge  \log(n_i)-o(\log(n_i))$. 
 
For the proof of the upper bound in part (a) of  Theorem~\ref{t:main2}  we construct an explicit subgroup $H$ of index $d(n)$ in $F(a,b)$ containing $w_n=a^nb^n$ and verify that that $w_n$ is primitive in $H$. (The subgroup $H$ is the kernel of an epimorphism from $F(a,b)$ onto the cyclic group $\mathbb Z_d$.) Hence, $d_{prim}(w_n; F_2)\le d(n)\le \log(n)+C$.
 
The proof of the lower bound for $d_{prim}(w_{n_i})$  in part (b) of  Theorem~\ref{t:main2} is more involved. The main algebraic trick is Lemma~\ref{power}. It shows that if $H$ is a subgroup of finite index in $F_2=F(a,b)$ and $a^k, b^l$ are the smallest positive powers of $a,b$ that belong to $H$ then there exists a free basis of $H$ containing both $a^k$ and $b^l$. We take $d=d(n_i)>1$ to be the smallest positive integer such that $d\nmid n_i$ and that $H$ is a subgroup of $F_2$ of index $m<d$ containing $w_{n_i}$. Then $a^k,b^l$ chosen as above satisfy $k,l\le m<d<n_i$. The definition  of $d$ implies that $k|n_i, l|n_i$ and therefore $w_{n_i}=a^{n_i}b^{n_i}=(a^k)^p(b^l)^q$ with $p,q\ge 2$. Since $a^k,b^l$ belong to a common free basis of $H$, a standard Whitehead graph argument implies that $w_{n_i}$ is not primitive in $H$. Therefore, by definition of $d_{prim}$, we have $d_{prim}(w_{n_i})\ge d(n_i)$. Well-known number-theoretic facts about the second Chebyshev function then imply that $d(n_i)\ge \log(n_i)-o(\log(n_i))$, and part (b) of  Theorem~\ref{t:main2} follows. 
 
We also obtain (see Proposition~\ref{prop4} below) the following upper bound result for words $a^nb^t\in F_2$ where $n,t\ge 1$ are arbitrary and not necessarily equal integers.  
\begin{theorem}
Let $n,t\ge 1$ and let $d,d'\ge 2$ be integers such that $d\nmid n$ and $d'\nmid t$, and that $d\le n, d'\le t$. Then  
\[
d_{simp}(a^n b^t; F_2)\le d_{prim}(a^n b^t; F_2)\le d+d'-2.
\]
\end{theorem}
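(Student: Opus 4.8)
The plan is to mimic the construction used for the upper bound in part (a) of Theorem~\ref{t:main2}, but allowing the two exponents to be handled separately. Given $n,t\ge 1$ and integers $d,d'\ge 2$ with $d\nmid n$, $d'\nmid t$, $d\le n$, $d'\le t$, I would build an explicit subgroup $H\le F_2=F(a,b)$ of index $d+d'-2$ containing $w=a^nb^t$, and verify directly that $w$ is primitive in $H$. Since primitive implies simple, the chain $d_{simp}(a^nb^t;F_2)\le d_{prim}(a^nb^t;F_2)\le [F_2:H]=d+d'-2$ then follows immediately from the definitions and from the inequality $d_{simp}\le d_{prim}$ recorded in the introduction.

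The construction I have in mind is via a Stallings subgroup graph. Take a core graph $\Gamma$ consisting of two petals attached at a common basepoint $v_0$: an $a$-labeled directed cycle $C_a$ of length $d$ through $v_0$ and a $b$-labeled directed cycle $C_b$ of length $d'$ through $v_0$, wedged at $v_0$. This is a folded graph with every vertex of degree two in $a$-edges along $C_a$ and degree two in $b$-edges along $C_b$; after completing it to a covering graph of the wedge of two circles (adding hanging trees to saturate missing $a$- and $b$-edges at the non-basepoint vertices), we get a subgroup $H=\pi_1(\Gamma,v_0)$ whose index is the number of vertices, which I will arrange to be $d+d'-2$. Concretely, the simplest model: let $H$ be generated by $a^d$, $b^{d'}$, and the conjugates $a^i b^{d'} a^{-i}$ for $1\le i\le d-1$ together with $b^j a^d b^{-j}$ for $1\le j\le d'-1$ — more cleanly, $H$ is the subgroup whose Stallings graph is exactly the wedge of the two cycles $C_a, C_b$ at $v_0$ with the requisite hanging trees; its rank is $d+d'-1$ (Euler characteristic of the wedge of two cycles) and its index is $d+d'-2$. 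The element $a^nb^t$ reads as a loop at $v_0$: since $d\le n$ (so the $a$-part wraps around $C_a$ at least once) and $d'\le t$, and crucially since $d\nmid n$, $d'\nmid t$, the path $a^nb^t$ traverses $C_a$ a non-integer number of times and then $C_b$ a non-integer number of times, returning to $v_0$; I need to check it is a reduced loop based at $v_0$, i.e.\ that $a^nb^t\in H$, which holds precisely because the path of $a$-edges followed by $b$-edges from $v_0$ stays on the two petals and closes up.

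To prove $w=a^nb^t$ is primitive in $H$, I would use a Whitehead-graph / Stallings-graph criterion: fix the free basis of $H$ read off from a spanning tree of $\Gamma$, write $w$ as a word $u$ in that basis, and show that the Whitehead graph of the cyclic word $u$ is disconnected or has a cut vertex, so that a sequence of Whitehead automorphisms reduces $u$ to a single basis letter; equivalently, exhibit the reduction directly. The point is that because $d\nmid n$ and $d'\nmid t$, after the $a^n$-portion the path lands at the vertex of $C_a$ at ``position $n\bmod d$'' and after $b^t$ at ``position $t\bmod d'$'' of $C_b$, and the resulting loop uses each of the short subpaths of $C_a$ and $C_b$ in a controlled way; one can peel off basis elements one at a time. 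Alternatively — and this is probably the cleanest route — I would instead show $w$ is part of a free basis of $H$ by directly producing the complementary basis elements: since $a^d, b^{d'}\in H$ and $w=a^nb^t$, the subgroup generated by $w$ together with an appropriate $(d+d'-3)$-element subset of the ``petal'' generators can be shown to equal $H$ by a Nielsen-transformation argument (using that $\gcd$-type relations let $a^{n\bmod d}$ and related short powers be recovered), which is exactly the mechanism behind the rank-$1$ case $d(n)$ in part (a).

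The main obstacle I anticipate is the primitivity verification, not the index bookkeeping: I must choose the graph $\Gamma$ (or, equivalently, the generating set of $H$) carefully enough that the single cyclic word representing $a^nb^t$ in the chosen basis is visibly Whitehead-reducible to a generator, and I must use the hypotheses $d\nmid n$ and $d'\nmid t$ in an essential way — if $d\mid n$ then $a^n$ would be a proper power inside $\langle a^d\rangle\le H$ and the Whitehead-graph argument would instead show $w$ is \emph{not} primitive (as in the lower-bound argument of part (b)), so the non-divisibility is exactly what makes the cyclic word ``short enough'' in the new basis. I expect the inequalities $d\le n$, $d'\le t$ to be needed only to ensure $a^n$ and $b^t$ genuinely wrap the petals (so that $w$ lies in $H$ and the graph has the claimed index), while the non-divisibility is what delivers primitivity. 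Once $\Gamma$ is pinned down, the rest is a routine, if slightly tedious, Stallings-graph computation of the kind already carried out in \cite{GK14} and in the proof of Theorem~\ref{t:main2}(a).
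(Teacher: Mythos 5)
Your overall strategy --- build an explicit core graph from an $a$-cycle of length $d$ and a $b$-cycle of length $d'$, complete it to a cover, and verify primitivity of $a^nb^t$ by collapsing a tree and applying a Nielsen/Whitehead argument --- is the same as the paper's. But the specific graph you propose does not work, and the defect is not a minor bookkeeping issue. If you wedge the two cycles at a \emph{single} basepoint $v_0$, then the core graph has $d+d'-1$ vertices, so the completed cover has index $d+d'-1$, not $d+d'-2$ (your claim that the rank is $d+d'-1$ and the index $d+d'-2$ is internally consistent but does not match the wedge graph, whose Euler characteristic forces index $=\#V=d+d'-1$). Worse, the membership $a^nb^t\in H$ fails for this graph: since $d\nmid n$, the path labelled $a^n$ starting at $v_0$ terminates at the vertex of the $a$-cycle at position $n\bmod d\ne 0$, which is \emph{not} $v_0$, and at that vertex the core graph has no $b$-edge at all; the $b^t$-portion is then forced into the hanging trees of the completion and there is no reason for the path to return to $v_0$. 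So the step you flag as ``holds precisely because the path \dots stays on the two petals and closes up'' is exactly the step that breaks.

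The paper's construction repairs both problems simultaneously by gluing the two cycles at \emph{two} points: writing $n=kd+r$ and $t=k'd'+r'$ with $0<r<d$, $0<r'<d'$, it identifies the basepoint $x_0$ of the $a$-cycle with the basepoint $z_0$ of the $b$-cycle, \emph{and} identifies the vertex $x_r$ (where $a^n$ lands) with the vertex $z_{d'-r'}$ (from which $b^{r'}$ returns to $z_0$). The second identification is what makes $a^nb^t$ close up as a loop inside the core graph and is also what drops the vertex count to $d+d'-2$. Collapsing the arc labelled $a^r$ between the two glue points then exhibits $\pi_1$ as $F(x,y_1,y_2)$ with the loop reading $x^k(y_2y_1)^{k'}y_2$, which the automorphism $y_1\mapsto y_2^{-1}y_1$ sends to $x^ky_1^{k'}y_2$, visibly primitive since $y_2$ occurs once. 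Without the double gluing, neither the index $d+d'-2$ nor the containment $a^nb^t\in H$ can be salvaged, so your proposal as written has a genuine gap rather than a different route to the same bound.
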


Note that the true asymptotics of $f_{simp}(n;F_N)$ and of $f_{\Sigma,\rho}^{fill}(L)$ remain a mystery. The results of Gupta and Kapovich \cite{GK14} provide only a fairly weak $C\frac{\log(n)}{\log \log(n)}$ lower bound for $d_{simp}(n,F_N)$. Gaster's proof \cite{Ga16} of a linear lower bound for $f_{\Sigma,\rho}(L)$ uses a sequence of curves $\gamma_n$ on $\Sigma$ that are non-filling, and thus have $\deg_{\Sigma,\rho}^{fill}(\gamma_n)=1$. Therefore, his argument sheds no light on the behavior of $f_{\Sigma,\rho}^{fill}(L)$.  

The results of the present paper indicate that using explicit sequences of group elements and curves may provide a fruitful approach to better understanding the behavior of $f_{simp}(n; F_N)$ and $f_{prim}(n; F_N)$ for free groups and of $f_\Sigma^{fill}(L)$ for surfaces.  

{\bf Acknowledgements.}

We are grateful to the initial referee for spotting an error in a number-theoretic estimate in the original version of this paper, and also for suggesting simplifications to our original proof of Theorem~\ref{t:main2}.

The first author was supported by the NSF grant DMS-1905641.

\section{Preliminaries}\label{S:prelim}

\subsection{Graphs}

We will use the notations and terminology regarding graphs, $A$-graphs, folded $A$-graphs, Stallings folds, etc., from \cite{KM02,GK14,St83}, and we refer the reader for the details to those sources. We briefly recall some of the relevant definitions here.

\begin{definition}\label{def1}
A  \emph{graph} $\Gamma$ is a 1-dimensional cell-complex. The 0-cells of $\Gamma$ are called \emph{vertices} and the set of vertices of a graph $\Gamma$ is labeled as $V \Gamma$.

Taking open 1-cells, \emph{topological edges} of $\Gamma$, these are homeomorphic to the open unit interval, $(0, 1)$, which is a 1-manifold having two orientations. An \emph{oriented edge} is a topological edge endowed with an orientation. For an oriented edge $e$, we denote by $\bar e$ the same topological edge with the opposite orientation. Note that for an oriented edge $e$ of $\Gamma$, we always have $e\ne \bar e$ and $\bar{\bar e}=e$.

We denote by $E \Gamma$  the set of oriented edges of a graph $\Gamma$.

Due to the fact that $\Gamma$ is a cell-complex, every oriented edge is endowed with some orientation-preserving map $j_{e}: [0, 1] \rightarrow \Gamma$, which provides a homeomorphism between the open unit interval $(0, 1)$ and an edge $e$ such that $j_{e}(0), j_{e}(1) \in V \Gamma$. And for any edge in the edge set, accordingly denote $j_{e}(0)$ and $j_{e}(1)$ by $o(e)$ and $t(e)$, which correspond to \emph{initial} and \emph{terminal} vertices of \emph{e}, respectively.

For a vertex $v\in V\Gamma$, the \emph{degree} $\deg_\Gamma(v)$ of $v$ in $\Gamma$ is the cardinality of the set $\{e\in E\Gamma| o(e)=v\}$. 

For all $i$, denote a sequence of edges $(e_i)_{i = 1}^{i = k}$, such that $(e_i) \in E \Gamma$, as an \emph{edge-path} $p \in \Gamma$ where $o(e_j) = t(e_{j - 1})$ for all $2 \leq j \leq k$. The length of the path \emph{p}, $|p|$, is defined as the number of edges in $p$. A \emph{reduced path} is a path that has no subpaths with cancellations from an edge and its inverse. Also, the set of reduced edge-paths from $x$ to $x$, for some $x \in V \Gamma$, will be identified as the fundamental group $\pi_1 (\Gamma, x)$.
\end{definition}

\begin{definition}[$A$-graph]
For an integer $N\ge 2$, denote by $F_N=F(a_1,\dots,a_N)$ the \emph{free group} of rank $N$ with the \emph{free basis} $A = \{ a_1,\dots,a_N \}$. 

An \emph{$A$-graph} is a graph $\Gamma$ together with the \emph{labelling map} $\mu:E\Gamma\to A\cup A^{-1}$ such that for every $e\in E\Gamma$, we have $\mu(\bar e)=(\mu(e))^{-1}$. 

An $A$-graph $\Gamma$ is \emph{folded} if there do not exist a vertex $x\in V\Gamma$  and edges $e_1,e_2\in E\Gamma$ with \\ $x = o(e_1) = o(e_2)$ such that $e_1\ne e_2$ and $\mu(e_1) = \mu(e_2)$.

The \emph{N-rose} $R_N$ is the wedge of $N$ loop-edges labelled at vertex $v_0$ consisting of edges $a_1, \dots, a_N$. Thus, $R_N$ is a folded $A$-graph.
\end{definition}

Note that there is a natural identification $F_N=F(A)=\pi_1(R_N,x_0)$.  If $\Gamma$ is an $A$-graph, the edge-labeling $\mu$ canonically defines a label-respecting map $f: \Gamma\to R_N$ that sends all vertices of $\Gamma$ to $v_0$. This map $f$ is an immersion if and only if $\Gamma$ is folded.
Moreover, if $\Gamma$ is folded, the corresponding map $f: \Gamma\to R_N$ is a covering map if and only if the graph $\Gamma$ is $2N$-regular, and in this case the degree of the covering is equal to $\#V\Gamma$. 

\subsection{Primitive and Simple Words}
\begin{definition}\label{def2}
A nontrivial element $w \in F_N$ is called \emph{primitive} in $F_N$ if \emph{w} belongs to a free basis of $F_N$.

A nontrivial element $w \in F_N$ is called \emph{simple} in $F_N$ if \emph{w} belongs to a proper free factor of $F_N$.

The \textit{primitivity index} $d_{prim}(w) = d_{prim}(w; F_N)$ of $w \in F_N$ is the smallest possible index for a subgroup $H\le F_N$ containing \emph{w} as a primitive word.

The \textit{simplicity index} $d_{simp}(w) = d_{simp}(w; F_N)$ of $w \in F_N$ is the smallest possible index for a subgroup $H\le F_N$ containing \emph{w} as a simple word \cite{GK14}.
\end{definition}

\begin{remark}\label{rmk1}
If $w \in F_N$ is primitive, then $w$ is also simple in $F_N$. As discussed in the Introduction,  for every $1\ne g\in F_N=F(A)$, one has~ \cite{GK14}:
\[
d_{simp}(g)\le d_{prim}(g)<||g||_A\le |g|_A<\infty.
\]

Note that the primitivity and simplicity of elements of $F_N$ are preserved under arbitrary automorphisms of $F_N$. Similarly, the definitions imply that for a nontrivial element of $F_N$ its primitivity and simplicity indexes are preserved by automorphisms of $F_N$ as well.
\end{remark}

\begin{remark}\label{rmk2}
Let $F_N = F(a_1, a_2, \dots, a_N)$. Let $w\in F(a_1, a_2, \dots, a_N)$ be a freely reduced word such that for some $1\le i\le N$ the generator $y_i^{\pm 1}$ appears in $w$ exactly once.  Then $w$ is primitive in $F_N$.
\end{remark}

\begin{proposition} \cite[Lemma 3.6]{GK14} \label{rmk3}
Let $N\ge 2$. Then for all integers $n\ge 1$ we have \[f_{simp}(n; F_N)\le f_{prim}(n; F_N)\le n.\]
\end{proposition}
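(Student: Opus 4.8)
The plan is to reduce the statement $f_{simp}(n;F_N)\le f_{prim}(n;F_N)\le n$ to the pointwise inequalities for individual elements that are already recorded in Remark~\ref{rmk1}, namely $d_{simp}(g)\le d_{prim}(g)<||g||_A\le |g|_A$ for every $1\ne g\in F_N$. Recall that by definition the worst-case functions are $f_{prim}(n;F_N)=\max\{d_{prim}(g;F_N): 1\ne g\in F_N,\ ||g||_A\le n\}$ and similarly for $f_{simp}$, so the inequality $f_{simp}(n;F_N)\le f_{prim}(n;F_N)$ is immediate by taking the maximum over the same finite set of elements of the pointwise bound $d_{simp}(g)\le d_{prim}(g)$.

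For the upper bound $f_{prim}(n;F_N)\le n$, first I would fix an arbitrary nontrivial $g\in F_N$ with $||g||_A\le n$ and invoke the Stallings/Marshall Hall argument recalled in the Introduction: there is a subgroup $H\le F_N$ of index $[F_N:H]\le ||g||_A$ with $g\in H$ and $g$ primitive in $H$. Hence $d_{prim}(g;F_N)\le ||g||_A\le n$. Taking the maximum over all such $g$ yields $f_{prim}(n;F_N)\le n$. (Strictly speaking one should check the sup is attained, which holds because there are only finitely many elements of $F_N$ of cyclically reduced length $\le n$ up to the relevant equivalence, so the maximum is over a finite set; alternatively one can just work with the supremum, for which the same bound applies term by term.) Chaining the two inequalities gives $f_{simp}(n;F_N)\le f_{prim}(n;F_N)\le n$.

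There is essentially no obstacle here: the content is entirely contained in Marshall Hall's theorem together with the Stallings-graph refinement giving the index bound $[F_N:H]\le ||g||_A$, both of which are quoted as known in the Introduction, plus the trivial monotonicity of $\max$. The only mild subtlety worth a sentence is the passage from the pointwise statement $d_{prim}(g)<||g||_A$ (strict, as in Remark~\ref{rmk1}) to $d_{prim}(g)\le ||g||_A$ and then to $\le n$; since $||g||_A$ is an integer and $||g||_A\le n$, the weaker non-strict bound $d_{prim}(g)\le n$ is all that is needed and follows at once. I expect the proof in the paper to be a single short paragraph citing \cite{GK14} for these facts.
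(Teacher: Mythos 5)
Your proposal is correct: the inequality follows exactly as you say, by taking maxima of the pointwise bounds $d_{simp}(g)\le d_{prim}(g)\le ||g||_A\le n$, the middle inequality coming from the Stallings-graph form of Marshall Hall's theorem. The paper gives no proof of its own here --- it simply cites \cite[Lemma 3.6]{GK14} --- and your argument is the standard one underlying that reference, consistent with the facts recorded in the Introduction and in Remark~\ref{rmk1}.
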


We recall  how simple and primitive words are related to Whitehead graphs due to work that Stallings established \cite{St99} by generalizing results from Whitehead \cite{Wh36}. We refer the reader to \cite{GK14, KSS06} for additional references and further background information on Whitehead graphs.

\begin{definition}[Whitehead graph]\label{def4}
Let $F_N=F(x_1,\dots, x_N)$ be the free group of finite rank $N\ge 2$ and let $w \in F_N$ be a nontrivial cyclically reduced word. Let $c$ be the first letter of $w$, so that the word $wc$ is freely reduced. We now define the \emph{Whitehead graph} of $w$, denoted $\Gamma_w$, as a simple graph with vertex set $V\Gamma_w=\{ x_{1}^{\pm1}, \dots, x_{N}^{\pm1} \}$ and with the edge set defined as follows. 

For $x,y\in V\Gamma_w$ such that $x^{-1}\ne y$, there exists an undirected edge $\{x^{-1}, y\}$ in $\Gamma_w$ joining $x^{-1}$ and $y$ whenever $xy$ or $y^{-1} x^{-1}$ occurs as a subword of $wc$.
\end{definition}

\begin{definition}\label{def5}
A \emph{cut vertex} in a graph $\Gamma_w$ is a vertex \emph{x} such that $\Gamma_w - \{ x \}$ is disconnected.
\end{definition}
 Note that if $\Gamma_w$ has at least one edge and is disconnected, then $\Gamma_w$ has a cut vertex, i.e., any end-vertex of an edge of $\Gamma_w$ is a cut vertex \cite{GK14}.

We will need the following important result of Stallings~\cite{St99} about Whitehead graphs of simple elements (this result was proved earlier by Whitehead~\cite{Wh36} for primitive elements):
\begin{proposition}\label{prop5}
If $w \in F_N$ is a simple and nontrivial cyclically reduced word, then $\Gamma_w$ has a cut vertex.
\end{proposition}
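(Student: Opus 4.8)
The plan is to prove Proposition~\ref{prop5} (the statement just before the excerpt ends): if $w\in F_N$ is a simple, nontrivial, cyclically reduced word, then $\Gamma_w$ has a cut vertex. First I would reduce to the case of a \emph{primitive} element. Since $w$ is simple, there is a proper free factor decomposition $F_N=U_1\ast U_2$ with $1\ne w\in U_1$. Inside the free group $U_1$ (of rank $k<N$, say with free basis $y_1,\dots,y_k$), the element $w$ need not be primitive; but after applying an automorphism of $U_1$ we may assume $w$ is of the form where $y_1$ occurs in it, and more to the point, $w$ together with a free basis of $U_2$ and the remaining $y_i$'s gives a free basis of $F_N$ only if $w$ is primitive in $U_1$. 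So the honest reduction is: it suffices to show that if $w$ lies in a proper free factor of $F_N$, then $\Gamma_w$ (computed in $F_N$) has a cut vertex. I would argue this directly: if $F_N=U_1\ast U_2$ and $w\in U_1$, pick a free basis $x_1,\dots,x_N$ of $F_N$ adapted so that $x_1,\dots,x_k$ is a basis of $U_1$ and $x_{k+1},\dots,x_N$ is a basis of $U_2$. Then the cyclically reduced form of $w$ only involves $x_1^{\pm1},\dots,x_k^{\pm1}$, so in $\Gamma_w$ the vertices $x_{k+1}^{\pm1},\dots,x_N^{\pm1}$ are isolated (no edge of $wc$ involves them). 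Since $w\ne 1$, $\Gamma_w$ has at least one edge, and since it also has an isolated vertex it is disconnected; by the remark after Definition~\ref{def5}, a disconnected graph with at least one edge has a cut vertex. But there is a subtlety: $\Gamma_w$ as defined is an invariant of the conjugacy class of $w$ only up to the chosen basis; the Whitehead graph is basis-dependent.

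To handle the basis-dependence cleanly, I would invoke the key fact that makes Whitehead's theory work: the property "$\Gamma_w$ has a cut vertex" is what one wants to be automorphism-robust, and Stallings' theorem says it holds \emph{in every basis} if $w$ is simple. The cleanest route is: (1) recall that simplicity of $w$ is preserved by $\mathrm{Aut}(F_N)$ (Remark~\ref{rmk1}); (2) given any fixed free basis $B$ of $F_N$, use Whitehead's algorithm / the peak-reduction argument to find an automorphism $\varphi$ taking $w$ to a cyclically reduced element $w'$ of minimal length in its $\mathrm{Aut}(F_N)$-orbit; (3) observe that a minimal-length simple element must actually lie (after a further basis change, or already) in a proper free factor \emph{spanned by a subset of the basis} — this is the content of the classification of minimal simple elements; (4) for such $w'$, the Whitehead graph $\Gamma_{w'}$ with respect to $B$ has isolated vertices as above, hence a cut vertex; (5) then apply Whitehead's lemma that an elementary Whitehead automorphism reducing length (or keeping it minimal) corresponds exactly to cut-vertex structure, and propagate backwards along the peak-reduction path to conclude that $\Gamma_w$ with respect to $B$ also has a cut vertex. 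Honestly, since this is quoted as a known result of Stallings~\cite{St99} (generalizing Whitehead~\cite{Wh36}), in the paper I would simply cite it; but if a proof is wanted, steps (2)--(5) are the skeleton, and one should cite \cite{St99, Wh36, KSS06} for the peak-reduction input rather than reprove it.

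The main obstacle is step (3)/(5): relating the \emph{combinatorial} cut-vertex condition on $\Gamma_w$ to the \emph{algebraic} condition of lying in a proper free factor, in a way that survives the passage through Whitehead's peak-reduction. The subtle point is that one cannot just say "$w$ is in a free factor, so some basis vertex is isolated" and be done, because the statement is about the Whitehead graph with respect to an \emph{arbitrary} chosen basis (the one appearing in Definition~\ref{def4}), not a basis adapted to the free factor decomposition. Whitehead's theorem bridges exactly this gap: if for the given basis $\Gamma_w$ had no cut vertex, then $w$ would already have minimal length in its orbit, and a minimal-length element in a proper free factor is, up to relabeling, supported on a proper subset of the basis — forcing an isolated vertex, hence a cut vertex, a contradiction. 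Making this last implication precise (that minimality plus membership in a proper free factor forces support on a proper sub-basis) is the real work and is where I would lean most heavily on the cited literature. I would present the proof as: reduce to primitive $w$ by Remark~\ref{rmk1} and the definition of simple; then cite Stallings~\cite[Theorem 2.4 or analogous]{St99} and Whitehead~\cite{Wh36} for the cut-vertex conclusion, spelling out only the easy direction (isolated vertex $\Rightarrow$ cut vertex via the remark after Definition~\ref{def5}) in full detail.
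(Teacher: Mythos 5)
The paper does not actually prove Proposition~\ref{prop5}: it is quoted as a known theorem of Stallings~\cite{St99} (generalizing Whitehead~\cite{Wh36}) and used as a black box, which is exactly what your proposal ultimately does, so you end up taking the same route. Your sketch of the underlying peak-reduction argument, and in particular your identification of the basis-dependence of $\Gamma_w$ as the genuine difficulty (one cannot simply pass to a basis adapted to the free factor decomposition), is accurate. One caveat: the ``reduction to the primitive case'' you float at the start and again at the end is not a valid reduction --- a simple element such as $a^2\in F(a,b)$ need not be primitive in any free factor containing it --- but this is harmless here, since Stallings' theorem is stated for simple elements directly and no such reduction is needed.
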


\begin{corollary}\label{cor2}
Let $F_N=F(a_1,\dots, a_N)$ be the free group of finite rank $N\ge 2$. Let $k_1,\dots, k_N\ge 2$ be arbitrary integers and let $w=a_1^{k_1} \cdots a_N^{k_N} \in F_N$. Then $w$ is not simple (and in particular, not primitive) in $F_N$.
\end{corollary}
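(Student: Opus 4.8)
The plan is to apply Proposition~\ref{prop5} (Stallings' result) and show that the Whitehead graph $\Gamma_w$ of $w=a_1^{k_1}\cdots a_N^{k_N}$ has no cut vertex, which forces $w$ to be non-simple. Note first that $w$ is already cyclically reduced (its first letter is $a_1$ and its last letter is $a_N^{\pm 1}$ with $N\ge 2$, so no cancellation occurs upon cyclic reduction), so Proposition~\ref{prop5} applies directly. Thus it suffices to establish that $\Gamma_w$ is connected and has no cut vertex.

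First I would determine the edges of $\Gamma_w$ explicitly. Reading the cyclic word $wc = a_1^{k_1}a_2^{k_2}\cdots a_N^{k_N}a_1$, the length-two subwords that occur are: $a_i a_i$ for each $i$ (since $k_i\ge 2$), the "transition" subwords $a_i a_{i+1}$ for $i=1,\dots,N-1$, and the wraparound subword $a_N a_1$. By Definition~\ref{def4}, the subword $a_i a_i$ contributes the edge $\{a_i^{-1}, a_i\}$; the subword $a_i a_{i+1}$ contributes the edge $\{a_i^{-1}, a_{i+1}\}$; and $a_N a_1$ contributes $\{a_N^{-1}, a_1\}$. So in $\Gamma_w$ every vertex $a_i$ is joined to $a_i^{-1}$ (a "bar" edge), and we also have a cycle $a_1^{-1} - a_2 - a_2^{-1} - a_3 - \cdots$; more precisely the transition edges link $a_i^{-1}$ to $a_{i+1}$, and combined with the bar edges $\{a_{i+1}^{-1},a_{i+1}\}$ they produce a single cycle through all $2N$ vertices: $a_1 - a_1^{-1} - a_2 - a_2^{-1} - a_3 - \cdots - a_N - a_N^{-1} - a_1$ (the last step being the wraparound edge $\{a_N^{-1},a_1\}$). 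A graph containing a Hamiltonian cycle on its vertex set is $2$-connected, hence has no cut vertex. Therefore $\Gamma_w$ has no cut vertex, so by Proposition~\ref{prop5} $w$ is not simple, and by Remark~\ref{rmk1} (primitive implies simple) $w$ is not primitive either.

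I do not anticipate a serious obstacle here; the only point requiring a little care is the bookkeeping in the Whitehead graph definition, since Definition~\ref{def4} records an edge $\{x^{-1},y\}$ when $xy$ occurs, so one must correctly translate each occurring length-two subword into the corresponding edge label (keeping track of the inversion on the first coordinate) and check that the resulting edges indeed chain together into a cycle covering all $2N$ vertices rather than, say, two disjoint cycles. One should also note the degenerate-looking case $N=2$ separately only to confirm nothing collapses: there $wc = a_1^{k_1}a_2^{k_2}a_1$, the edges are $\{a_1^{-1},a_1\}$, $\{a_2^{-1},a_2\}$, $\{a_1^{-1},a_2\}$, $\{a_2^{-1},a_1\}$, and these four edges form a $4$-cycle $a_1 - a_1^{-1} - a_2 - a_2^{-1} - a_1$, which is $2$-connected. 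Hence the conclusion holds uniformly for all $N\ge 2$.
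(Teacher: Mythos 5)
Your proof is correct and follows essentially the same route as the paper: both compute the cyclic length-two subwords of $w$, translate them into the edges $\{a_i^{-1},a_i\}$, $\{a_i^{-1},a_{i+1}\}$, and $\{a_N^{-1},a_1\}$, observe that $\Gamma_w$ is a single cycle through all $2N$ vertices with no cut vertex, and conclude via Proposition~\ref{prop5}. No gaps.
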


\begin{proof}
Let $k_1,\dots, k_N\ge 2$ and let $w=a_1^{k_1} \cdots a_N^{k_N} \in F_N=F(a_1,\dots, a_N)$. Thus, $w$ is a nontrivial freely and cyclically reduced word. We now construct the Whitehead graph $\Gamma_w$ as defined in Definition \ref{def4}.

The two-letter subwords cyclically occurring in $w$ are precisely $a_i^2$, where $i=1,\dots, N$ and $a_ia_{i+1}$ where $i=1,\dots, N-1$, as well as the subword $a_Na_1$. Therefore, as in Figure \ref{WHGraph}, the edges in the (simple) graph $\Gamma_w$ are as follows:

For $i=1,\dots, N$ we have an edge $\{a_i, a_i^{-1}\}$. For $i=1,\dots, N-1$ we have an edge $\{a_i^{-1}, a_{i+1}\}$, and we also have an edge $\{a_N^{-1},a_1\}$. Thus, we see that the graph $\Gamma_w$ is a topological circle with the vertex set $\{a_1^{\pm 1}, \dots , a_N^{\pm 1}\}$. In particular, $\Gamma_w$ has no cut-vertices. Hence, by Proposition~\ref{prop5}, the element $w\in F_N$ is not simple. 
\end{proof}

\begin{figure}
    \centering
    \includegraphics[width=3in]{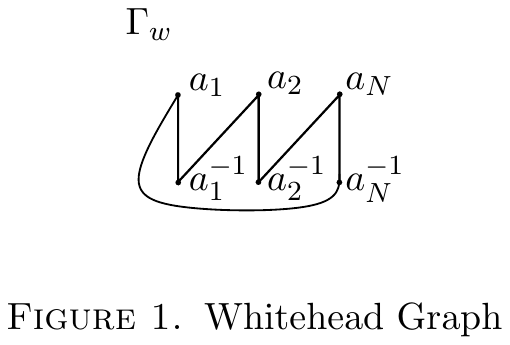}
    \caption{Whitehead Graph}
    \label{WHGraph}
\end{figure}

We recall the following useful fact about primitivity in free groups, see~\cite[Proposition 4.5]{GK14}:

\begin{proposition}\label{p:pr}
Let $F_N$ be a free group of finite rank $N\ge 2$, let $U\le F_N$ be a free factor of $F_N$ and let $1\ne g\in U$. Then $g$ is primitive in $U$ if and only if $g$ is primitive in $F_N$.
\end{proposition}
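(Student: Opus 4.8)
The plan is to prove both implications by relating free bases and free factors of $U$ to those of $F_N$. Write $F_N = U \ast V$ for some (possibly trivial) complementary free factor $V$; since $U$ is a free factor this is possible, and if $U = F_N$ there is nothing to prove, so assume $V \ne 1$. The key structural fact I would use is the following standard property of free factors: if $U \le F_N$ is a free factor and $B_U$ is any free basis of $U$, then $B_U$ extends to a free basis of $F_N$ (namely $B_U$ together with a free basis of $V$), and conversely, if $B$ is a free basis of $F_N$ and $U_0 \le F_N$ is the subgroup generated by a subset $B_0 \subseteq B$, then $U_0$ is a free factor of $F_N$ with free basis $B_0$.

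For the ``only if'' direction: suppose $g$ is primitive in $U$, so $g$ lies in some free basis $B_U$ of $U$. By the fact above, $B_U$ extends to a free basis $B$ of $F_N$ with $B_U \subseteq B$, and since $g \in B_U \subseteq B$, the element $g$ is primitive in $F_N$. For the ``if'' direction: suppose $g$ is primitive in $F_N$. The cleanest route is via retractions. Fix the decomposition $F_N = U \ast V$ and let $r\colon F_N \to U$ be the retraction that kills $V$ and is the identity on $U$; note $r(g) = g$ since $g \in U$. Because $g$ is primitive in $F_N$, there is an automorphism $\varphi$ of $F_N$ with $\varphi(a_1) = g$ for some basis element $a_1$; equivalently, $g$ together with suitable elements $h_2,\dots,h_N$ forms a free basis of $F_N$. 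Applying the retraction $r$ to this basis, the images $g, r(h_2),\dots,r(h_N)$ generate $r(F_N) = U$. Since $U$ has rank equal to the number of these generators minus the rank of $V$ \dots — and here one must be careful — the correct statement to invoke is: a set of elements obtained by applying a retraction onto a free factor to a free basis of the ambient group, and then discarding the generators that map into the kernel, yields a free basis of the free factor. Concretely, one can reindex so that a free basis of $F_N$ is $g, u_2, \dots, u_k, v_1, \dots, v_m$ where $u_i \in U$ need not hold a priori, so instead I would argue as follows: $g$ is primitive in $F_N = U \ast V$; by a Whitehead-type / Grushko-type argument (or directly by \cite[Proposition 4.5]{GK14}-style reasoning) a primitive element of a free product that happens to lie in one free factor is primitive in that free factor.

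The main obstacle is making the ``if'' direction fully rigorous: the naive attempt to push a free basis of $F_N$ through the retraction $r$ and extract a basis of $U$ does not immediately work, because a free basis of $F_N$ need not be compatible with the splitting $F_N = U \ast V$. The clean fix is to invoke the well-known fact that if $g$ is a primitive element of a free product $A \ast B$ with $g \in A$, then $g$ is primitive in $A$ — this follows from Whitehead's algorithm / the peak reduction theorem, since any Whitehead automorphism of $A \ast B$ carrying $g$ into a basis can be replaced by one supported on $A$, or alternatively from the Kurosh subgroup theorem applied to $\langle g \rangle^{F_N}$. Once this is in hand, both directions are immediate, and I would present the argument in roughly three short steps: (i) reduce to $V \ne 1$ and recall that free bases of $U$ extend to free bases of $F_N$; (ii) deduce ``only if'' directly; (iii) deduce ``if'' from the free-product primitivity fact, with a one-line pointer to its proof via Whitehead's theorem. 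I expect step (iii) to be the only place requiring genuine care, and I would cite a standard reference (e.g.\ results in the style of \cite{St99,Wh36}) rather than reprove peak reduction.
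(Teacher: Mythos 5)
The paper does not actually prove this proposition; it is quoted verbatim from \cite{GK14} (Proposition 4.5 there), so there is no internal argument to compare yours against. Judged on its own terms, your proposal is half of a proof. The ``only if'' direction is correct and complete: a free basis of a free factor $U$ extends to a free basis of $F_N$ by adjoining a basis of a complement $V$, so primitivity in $U$ passes up to $F_N$. The ``if'' direction, however, has a genuine gap. Your retraction argument does not work, as you yourself note: a free basis of $F_N$ containing $g$ need not interact well with the splitting $F_N=U\ast V$, and pushing it through the retraction $r\colon F_N\to U$ generally destroys freeness (images of basis elements can satisfy relations in $U$, and there is no canonical subset to discard). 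Your ``clean fix'' is then to invoke ``the well-known fact that a primitive element of $A\ast B$ lying in $A$ is primitive in $A$'' --- but that \emph{is} the ``if'' direction of the proposition, so as written this step is circular rather than a proof.

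The good news is that the Kurosh route you mention in passing can be made precise, and it is the cleanest elementary argument: since $g$ is primitive in $F_N$, the cyclic group $\langle g\rangle$ is a free factor of $F_N$, say $F_N=\langle g\rangle\ast W$. Apply the Kurosh subgroup theorem to the subgroup $U$ with respect to this splitting (not to the normal closure $\langle g\rangle^{F_N}$, which is the wrong object here). Because $g\in U$, the double coset of the identity contributes the factor $U\cap\langle g\rangle=\langle g\rangle$ to the resulting free product decomposition of $U$, so $\langle g\rangle$ is a free factor of $U$ and hence $g$ is primitive in $U$. Alternatively, one can argue via Whitehead peak reduction relative to the free factor system $\{U,V\}$, as you suggest, but that requires quoting a relative version of Whitehead's theorem that is heavier than what is needed. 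Either way, the converse direction needs an actual argument or a citation to \cite{GK14}; as it stands your step (iii) asserts the conclusion rather than deriving it.
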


\subsection{Second Chebyshev Function}\label{Chebyshev}
Before proving our main results, we turn to a discussion on the \emph{second Chebyshev function}, $\psi(m)$, where for an integer $m\ge 1$ we have $e^{\psi(m)}=lcm(1,2,\dots,m)$. We will repeatedly deploy the asymptotics of $\psi(m)$ to find primitivity and simplicity bounds in our main results. Taking logarithms, one gets $\psi(m)=\log[lcm(1,2,\dots,m)]$. Historically, there has been a great deal of research on analyzing the growth rate of the second Chebyshev function, and its properties are closely related to the prime-counting function and the Prime Number Theorem.

A well-known result concerning the second Chebyshev function comes from the work of Rosser and Schoenfeld \cite{RS62}:

\begin{proposition}\label{prop1} \cite[Theorem 11]{RS62}.
Let $R=\frac{515}{(\sqrt{546}-\sqrt{322})^2} \approx 17.51631$ and \[\varepsilon(m)=\sqrt{\log(m)}\, exp[- \sqrt{\frac{\log(m)}{R}}].\] 
Then for $m\ge 2$ we have
\[
[1-\varepsilon(m)]m<\psi(m),
\]
and for $m\ge 1$ we have 
\[
\psi(m)<[1+\varepsilon(m)]m.
\]
\end{proposition}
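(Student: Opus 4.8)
This is a classical result of analytic number theory, and we do not reprove it; we merely indicate the shape of the argument from \cite{RS62}, in the spirit of recording how one \emph{would} establish it. The starting point is the von Mangoldt explicit formula relating $\psi(x)$ to the nontrivial zeros $\rho=\beta+i\gamma$ of the Riemann zeta function:
\[
\psi(x)=x-\sum_{\rho}\frac{x^{\rho}}{\rho}-\log(2\pi)-\tfrac12\log\!\big(1-x^{-2}\big),
\]
valid for real $x>1$ that are not prime powers, the zero sum being read as $\lim_{T\to\infty}\sum_{|\gamma|\le T}$. Since $\psi$ is a step function, constant on each interval $[m,m+1)$ for integer $m$, the real-variable estimates transfer verbatim to the integer-argument bounds claimed in the Proposition. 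Thus everything reduces to controlling $\big|\sum_{\rho} x^{\rho}/\rho\big|$, and by symmetry the upper and lower bounds for $\psi(m)$ are handled by the same analysis up to signs.

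The first substantive step I would take is to invoke two explicit inputs: (i) a numerically explicit zero-free region of de la Vallée Poussin type, asserting that $\zeta(\beta+i\gamma)\ne 0$ whenever $\beta\ge 1-\frac{1}{R_0\log|\gamma|}$ for $|\gamma|$ beyond some bound, together with the verification that all zeros of small imaginary part lie on the critical line $\beta=\tfrac12$; and (ii) an explicit form of the Riemann--von Mangoldt counting formula, $N(T)=\frac{T}{2\pi}\log\frac{T}{2\pi e}+O(\log T)$ with effective constants. One then splits the zero sum at a height $T$: the tail $\sum_{|\gamma|>T}x^{\rho}/\rho$ is bounded by contour-shifting together with partial summation against $N(T)$, while in the head $\sum_{|\gamma|\le T}x^{\rho}/\rho$ the zero-free region forces $\beta\le 1-\frac{1}{R_0\log T}$, so that each term contributes at most $x\cdot x^{-1/(R_0\log T)}$. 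Choosing $T$ to balance the head against the tail yields an error of the classical form $x\exp(-c\sqrt{\log x})$, and a careful numerical optimization of every intervening constant produces exactly the exponent encoded by $R=\tfrac{515}{(\sqrt{546}-\sqrt{322})^2}$ together with the $\sqrt{\log x}$ prefactor appearing in $\varepsilon(m)$.

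The main obstacle — and the reason this occupies a large portion of \cite{RS62} — is making all constants fully explicit: this demands an explicit zero-free region (resting in turn on explicit bounds for $\zeta$ and $\zeta'/\zeta$ near $\Re s=1$), an explicit verification of the Riemann Hypothesis up to a sufficiently large height, explicit constants in the zero-counting estimate, and finally a delicate optimization over the truncation parameter. We stress that for the present paper none of this precision is needed: Proposition~\ref{prop1} is used purely as a black box to control $lcm(1,\dots,m)=e^{\psi(m)}$, and any effective Prime Number Theorem estimate of the shape $\psi(m)=m+O\!\big(m\,e^{-c\sqrt{\log m}}\big)$ would suffice for the $o(\log n_i)$ error terms in Theorem~\ref{t:main2}; the specific value of $R$ is immaterial downstream.
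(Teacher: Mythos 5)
The paper offers no proof of this proposition at all --- it is quoted directly from Rosser and Schoenfeld \cite[Theorem 11]{RS62} and used purely as a black box --- so your decision to cite rather than reprove it matches the paper exactly, and your sketch of the explicit-formula/zero-free-region argument is a fair summary of what actually happens in \cite{RS62}. Your closing observation is also the right one: all the paper extracts from this statement is $\psi(m)=m+o(m)$ (via Corollary~\ref{prop3}), so the precise value of $R$ and the exact shape of $\varepsilon(m)$ are immaterial downstream.
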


\begin{proposition}\label{prop2} \cite[Theorem 12]{RS62}.
The quotient $\frac{\psi(m)}{m}$ takes its maximum at $m=113$, and for $m>0$,
\[
\psi(m)<1.03883m.
\]
\end{proposition}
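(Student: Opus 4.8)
Proposition~\ref{prop2} is not an original result: both the maximizer $m=113$ and the explicit constant $1.03883$ are quoted verbatim from Rosser and Schoenfeld, so the ``proof'' I would write is simply a pointer to \cite[Theorem~12]{RS62}. (In fact only the soft consequences $\psi(m) < 1.04\,m$ for all $m$ and $\psi(m) \ge m - o(m)$ are ever used below, in the proof of Lemma~\ref{lem2}, so the precise constant will not be essential for us.) For orientation I sketch the structure of the argument one finds in that reference.

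The plan behind \cite[Theorem~12]{RS62} runs as follows. Since $\psi(m) = \sum_{p^k \le m}\log p = \log lcm(1,\dots,m)$, the relation $\psi(m)/m \to 1$ is equivalent to the Prime Number Theorem, whereas Chebyshev's elementary binomial-coefficient method yields only a weaker explicit bound with constant about $1.106$. To push this down to the uniform bound $\psi(m) < 1.03883\,m$ valid for all $m > 0$, one would split the range of $m$ into a ``large'' part and a ``small'' part. On the large part one applies an explicit estimate for $\psi(m)-m$ of the type recorded in Proposition~\ref{prop1} --- which itself rests on an explicit zero-free region for $\zeta(s)$ together with a smoothed form of the explicit formula $\psi(x) = x - \sum_\rho x^\rho/\rho - \cdots$ --- and checks that it forces $\psi(m)/m < 1.03883$ once $m$ passes a computable threshold. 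On the remaining finite part one verifies the inequality by direct computation from tables of prime powers, and it is during that computation that one locates the maximum of $m \mapsto \psi(m)/m$ at $m = 113$, where $\psi(113) = \log lcm(1,\dots,113)$ produces the stated numerical constant.

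The hard part --- and the reason I would cite rather than reprove --- will be bridging those two ranges. In the moderate regime the error factor $\varepsilon(m)$ appearing in Proposition~\ref{prop1} can exceed $1$, so the asymptotic estimate is useless there; and yet the ``small'' range that one must then clear by explicit computation is far too large to treat by brute force. Rosser and Schoenfeld handle this by interpolating between several explicit estimates valid in different windows of $m$, and redoing that careful analytic-number-theoretic bookkeeping is well outside the scope of the present paper.
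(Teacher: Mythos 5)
Your proposal matches the paper exactly: Proposition~\ref{prop2} is stated there with no proof beyond the citation to \cite[Theorem~12]{RS62}, and your sketch of what that reference does is accurate background rather than a required argument. Deferring to the source is the right call here, since only the crude consequences of this bound are ever used in Lemma~\ref{lem2}.
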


Also, for primes $p$, and positive integers $k$, one has~\cite{Pe85}
\[
\psi(m) = \sum_{p^{k} \leq m} \log(p).
\]

For completeness, we prove the following well-known result in number theory that we will need in this paper:

\begin{corollary}\label{prop3}
For any $\varepsilon>0$, there exists $m_0=m_0(\varepsilon)$ such that for all $m\ge m_0$, we have
\begin{equation}\label{eq:A0}
|\psi(m)-m|<\varepsilon m
\end{equation}
\end{corollary}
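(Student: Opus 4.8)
The plan is to derive this immediately from the two-sided Rosser--Schoenfeld estimate in Proposition~\ref{prop1}, reducing everything to the single observation that the error term $\varepsilon(m)$ there tends to $0$ as $m\to\infty$. Indeed, for $m\ge 2$ Proposition~\ref{prop1} gives both $\psi(m)<[1+\varepsilon(m)]m$ and $\psi(m)>[1-\varepsilon(m)]m$, which combine to
\[
|\psi(m)-m|<\varepsilon(m)\,m \qquad \text{for all } m\ge 2,
\]
where $\varepsilon(m)=\sqrt{\log(m)}\,\exp\!\big[-\sqrt{\log(m)/R}\,\big]$. So it suffices to prove $\lim_{m\to\infty}\varepsilon(m)=0$: once we have that, given any $\varepsilon>0$ we simply pick $m_0=m_0(\varepsilon)\ge 2$ large enough that $\varepsilon(m)<\varepsilon$ for all $m\ge m_0$, and then $|\psi(m)-m|<\varepsilon(m)m<\varepsilon m$ for all such $m$, which is exactly~\eqref{eq:A0}.

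To check that $\varepsilon(m)\to 0$, I would substitute $t=\log(m)$ (so $t\to\infty$ as $m\to\infty$) and write
\[
\varepsilon(m)=\exp\!\left[\tfrac{1}{2}\log t-\sqrt{t/R}\,\right].
\]
Since $\sqrt{t/R}$ grows like a positive constant times $t^{1/2}$ while $\tfrac12\log t$ grows only logarithmically, the exponent $\tfrac12\log t-\sqrt{t/R}\to-\infty$ as $t\to\infty$; hence $\varepsilon(m)\to 0$. (If one wants to be fully explicit, note that for $t$ large enough $\sqrt{t/R}\ge \log t$, e.g. whenever $t\ge R(\log t)^2$, so the exponent is $\le -\tfrac12\log t\to-\infty$.) This completes the argument.

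I do not anticipate any real obstacle here: the statement is a soft asymptotic consequence of the much sharper effective bounds already quoted, and the only mildly delicate point is the elementary growth comparison $t^{1/2}\gg \log t$, which is standard. The one thing to be careful about is that Proposition~\ref{prop1} states the lower bound $[1-\varepsilon(m)]m<\psi(m)$ only for $m\ge 2$, so the conclusion is correctly phrased for $m\ge m_0$ with $m_0\ge 2$, which is all that is claimed.
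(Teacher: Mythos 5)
Your proposal is correct and follows essentially the same route as the paper, which likewise deduces $\psi(m)=m+o(m)$ from the two-sided bounds of Proposition~\ref{prop1} and concludes immediately; you have merely spelled out the elementary verification that $\varepsilon(m)\to 0$, which the paper leaves implicit.
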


\begin{proof}

Proposition \ref{prop1} implies that $\psi(m)=m+o(m)$ as $m\to\infty$. Therefore, $|\psi(m)-m|<\varepsilon m$ for all sufficiently large $m$, so that \eqref{eq:A0} holds.
\end{proof}

Note that as in Corollary~\ref{prop3}, the second Chebyshev function can be expressed as:
\[
\psi(m)=m+o(m), m \rightarrow+\infty.
\]

\begin{convention}\label{con1}
For an integer $n\ge 1$, we denote by $d(n)$ the smallest integer $d\ge 2$ such that $d\nmid n$.
\end{convention}

\begin{lemma}\label{lem1}
If $n\ge 3$, then $1<d(n)<n$.
\end{lemma}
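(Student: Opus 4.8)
The statement to prove is Lemma~\ref{lem1}: for $n\ge 3$ one has $1<d(n)<n$, where $d(n)$ is the least integer $d\ge 2$ with $d\nmid n$, as fixed in Convention~\ref{con1}.

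The plan is as follows. The lower bound $1<d(n)$ is immediate and requires no argument beyond unwinding Convention~\ref{con1}: by definition $d(n)$ is an integer with $d(n)\ge 2$, hence $d(n)>1$. For the upper bound $d(n)<n$ it suffices, by the minimality in the definition of $d(n)$, to exhibit a single integer $d$ with $2\le d<n$ and $d\nmid n$; then $d(n)\le d<n$. The natural candidate is $d=n-1$. First I would note that since $n\ge 3$ we have $2\le n-1<n$, so $d=n-1$ is an admissible value to test. Next I would observe that $n=(n-1)\cdot 1+1$, so $n-1$ divides $n$ if and only if $n-1$ divides $1$, i.e.\ if and only if $n-1=1$; since $n\ge 3$ gives $n-1\ge 2$, this fails, so $(n-1)\nmid n$. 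Combining, $d(n)\le n-1<n$, which together with $d(n)>1$ completes the proof.

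There is essentially no obstacle here: the only subtlety is the hypothesis $n\ge 3$, which is exactly what is needed to guarantee that $n-1$ is at least $2$ (so that it is a legitimate divisor-test value in the sense of Convention~\ref{con1}) and at the same time does not divide $n$; for $n=2$ the conclusion genuinely fails since $d(2)=3$.
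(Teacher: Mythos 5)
Your proof is correct and follows essentially the same route as the paper: both arguments take $d=n-1$ as the witness and verify $(n-1)\nmid n$ for $n\ge 3$ (the paper by deriving a contradiction from $n=k(n-1)$ with $k\ge 2$, you via $n=(n-1)\cdot 1+1$), then conclude $d(n)\le n-1<n$.
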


\begin{proof}
Let $n\ge 3$. We claim that $n-1\nmid n$. Indeed suppose that $(n-1) | n$. Then $n=k(n-1)$ for $k\ge 2$, and $n\ge 2(n-1)=2n-2$ implies that $n\le 2$, which is a contradiction. Thus, $n-1\nmid n$, and hence $d(n)\le n-1<n$, as required.  
\end{proof}

To later determine bounds on the number of vertices of some graph, we need the following lemma:

\begin{lemma}\label{lem2}
There exist a constants $C'\ge 0$, an integer $n_0\ge 3$ and a function $\alpha(x)\ge 0$, $\alpha(x)=_{x\to\infty} o(x)$ with the following properties.

Let $n\ge 2$ be an integer and let $d=d(n)\ge 2$ be the smallest integer that does not divide $n$.  For $i=1,2,3,\dots $, put $n_i=lcm(1,2,\dots,i)$; in this case we use $d=d(n_i)$.

Then:

\begin{enumerate}
\item[(a)] For all $n\ge n_0$, we have $d\le \log(n)+\log(2)+1$.
\item[(b)] For all $n\ge 2$, we have $d\le \log(n)+C'$.
\item[(c)] For all $i\ge 2$, we have $d\ge \log(n_i)-\alpha(\log(n_i))$.
\end{enumerate}
\end{lemma}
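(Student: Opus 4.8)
The plan is to relate $d(n)$, the smallest integer $d \ge 2$ with $d \nmid n$, to the second Chebyshev function $\psi$ via the observation that $d(n)$ is essentially the least prime power that does not divide $n$. I would first establish the key equivalence: $d(n) \le d$ if and only if $lcm(1,2,\dots,d-1) \nmid n$, and more usefully, if $n$ is divisible by $lcm(1,2,\dots,m) = e^{\psi(m)}$ then $d(n) \ge m+1$, while if $d(n) > m$ then every integer in $\{1,\dots,m\}$ divides $n$, hence $e^{\psi(m)} = lcm(1,\dots,m)$ divides $n$, so $\psi(m) \le \log(n)$. Combining, $d(n)$ is characterized by: $d(n) - 1$ is the largest $m$ with $lcm(1,\dots,m) \mid n$, equivalently $\psi(d(n)-1) \le \log n$ but $\psi(d(n)) > \log n$ is \emph{not} quite right (one should be careful: $d(n)-1$ dividing $n$ does not force all of $1,\dots,d(n)-1$ to divide $n$ — wait, it does, by minimality of $d(n)$). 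So the clean statement is: $\psi(d(n)-1) \le \log n < \psi(d(n))$ fails in general on the right end, but we do get the crucial lower bound $\psi(d(n)-1) \le \log n$, i.e. $lcm(1,\dots,d(n)-1)\mid n$.

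For part (b), which is the cheapest, I would use $\psi(d-1) \le \log n$ together with the Rosser–Schoenfeld lower bound (Proposition~\ref{prop1}) or even the crude bound $\psi(m) \ge c m$ for some explicit $c > 0$ and $m$ large, plus Lemma~\ref{lem1} to handle small $n$, to conclude $d - 1 \le \frac{1}{c}\log n$, which after absorbing constants gives $d \le \log n + C'$ for a suitable $C'$ valid for all $n \ge 2$ (checking the finitely many small cases by hand or noting $d \le n-1$ from Lemma~\ref{lem1}). For part (a), the sharper constant, I would instead invoke Corollary~\ref{prop3} (or directly Proposition~\ref{prop1}): for any $\varepsilon > 0$ and $m \ge m_0(\varepsilon)$ we have $\psi(m) > (1-\varepsilon)m$; applying this to $m = d-1$ when $d-1 \ge m_0$ yields $(1-\varepsilon)(d-1) < \psi(d-1) \le \log n$, so $d - 1 < \frac{\log n}{1-\varepsilon}$, and choosing $\varepsilon$ appropriately (and $n$, hence $d$, large enough) gives $d \le \log n + \log 2 + 1$; the bounded range of $n$ for which $d - 1 < m_0$ can be absorbed by enlarging $n_0$.

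For part (c), with $n_i = lcm(1,\dots,i) = e^{\psi(i)}$, the point is that $d(n_i) = $ the smallest $d \ge 2$ not dividing $n_i$, and since every integer up to $i$ divides $n_i$, we have $d(n_i) \ge i+1$, hence $d \ge i+1$. Meanwhile $\log(n_i) = \psi(i)$, so I need $d \ge \psi(i) - \alpha(\psi(i))$, i.e. $i + 1 \ge \psi(i) - \alpha(\psi(i))$. By Corollary~\ref{prop3}, $\psi(i) = i + o(i)$, so $\psi(i) - i = o(i)$, and since $\psi(i) \sim i$ we also have $o(i) = o(\psi(i))$; thus setting $\alpha(x)$ to be (an upper bound for) $\psi(i) - i - 1$ re-expressed as a function of $x = \psi(i)$, monotonized and made $o(x)$, does the job. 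Concretely I would define $\alpha$ via $\alpha(\log n_i) := \psi(i) - i$ on the discrete set $\{\psi(i)\}$ and extend/dominate it by an explicit $o(x)$ function coming from the $\varepsilon(m)$ term in Proposition~\ref{prop1}, e.g. something like $\alpha(x) = 2\sqrt{x}\exp(-\sqrt{x/(2R)})\cdot x$ — or more simply, argue non-constructively: for each $\varepsilon>0$, for $i$ large, $\psi(i) - i < \varepsilon i \le \varepsilon \psi(i)$, so $d \ge i + 1 > \psi(i)(1-\varepsilon) = \log(n_i)(1-\varepsilon)$, which is $\log(n_i) - o(\log(n_i))$.

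The main obstacle, and the part requiring the most care, is part (a): getting the \emph{explicit} constant $\log 2 + 1$ rather than just "some constant" forces one to be precise about the error term $\varepsilon(m)$ in Proposition~\ref{prop1} and about which $n_0$ is needed — one must verify that $(1-\varepsilon(d-1))(d-1) \le \log n$ combined with $d - 1 \le \log n + \log 2$ (the target minus $1$) is self-consistent for all large $d$, i.e. that $\varepsilon(m) \cdot m$ stays below roughly $\log 2 + $ (lower-order slack) — actually since $\varepsilon(m) m \to \infty$, the bound must instead be engineered so that $\log n \ge \psi(d-1) \ge (d-1) - \varepsilon(d-1)(d-1)$ and one wants $d \le \log n + \log 2 + 1$, i.e. $d - 1 - \log 2 \le \log n$; this needs $(d-1) - \varepsilon(d-1)(d-1) \ge d - 1 - \log 2$, i.e. $\varepsilon(d-1)(d-1) \le \log 2$, which is \emph{false} for large $d$. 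So the honest approach for (a) is the reverse: since $\psi(m) = m + o(m)$, for large $m$ we have $\psi(m) > m - 1$ say (as $o(m) > -1$ eventually — in fact $\psi(m) > m(1-\varepsilon(m))$ and we just need $\psi(m) \ge m - \log 2$ for... no). The correct bookkeeping: $d(n) - 1$ is the largest $m$ with $\psi(m) \le \log n$, so $d(n) = 1 + \max\{m : \psi(m) \le \log n\}$, hence $\psi(d(n)) > \log n$, giving $(1 - \varepsilon(d(n))) d(n) < \psi(d(n))$... no wait that's the wrong direction too. We have $\psi(d(n)) > \log n$ and $\psi$ grows, and $\psi(m) < (1+\varepsilon(m))m$, so this gives a \emph{lower} bound on $d(n)$, not upper. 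The genuinely useful inequality is $\psi(d(n)-1) \le \log n$ with the lower bound $\psi(m) \ge m(1 - \varepsilon(m))$: so $(d(n)-1)(1-\varepsilon(d(n)-1)) \le \log n$, thus $d(n) - 1 \le \frac{\log n}{1 - \varepsilon(d(n)-1)}$. For large $d(n)$, $\frac{1}{1-\varepsilon} = 1 + \varepsilon + O(\varepsilon^2)$, so $d(n) - 1 \le \log n + \varepsilon(d(n)-1)\log n + \cdots$, and since $\varepsilon(m) \log n$ — with $m \approx \log n$ — is $\varepsilon(\log n)\log n = o(\log n)$ but \emph{not} bounded by $\log 2$. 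Hence part (a) as literally stated ($d \le \log n + \log 2 + 1$) must be using the integrality of $d$ and a much tighter input, or the intended reading is asymptotic with the $+\log 2 + 1$ just being a convenient clean bound that holds because the $o(\log n)$ correction is eventually dominated — I would therefore present (a) by showing $d(n) \le \log n + \log 2 + 1$ reduces (via $d$ integer and $\psi(d-1)\le\log n$) to checking $\psi(\lceil \log n + \log 2\rceil) > \log n$, i.e. $\psi(m) > 2^{m - \log 2} \cdot$ ... this is where I would slow down and do the calculation honestly against Proposition~\ref{prop1}, treating the at-most-finitely-many exceptions by enlarging $n_0$, and deferring the truly uniform all-$n\ge 2$ statement to the softer constant $C'$ in part (b).
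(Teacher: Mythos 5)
Your part (c) is correct and is essentially the paper's argument: $d(n_i)\ge i+1$ because $1,\dots,i$ all divide $n_i$, while $\log(n_i)=\psi(i)=i+o(i)$, and since $\psi(i)\sim i$ the error $o(i)$ is also $o(\log n_i)$. Your starting point for (a) and (b) --- minimality of $d$ forces $lcm(1,\dots,d-1)\mid n$, hence $\psi(d-1)\le\log n$ --- is also exactly the paper's. The problem is that you never actually close parts (a) and (b). For (b), the step ``$\psi(d-1)\ge c(d-1)$ gives $d-1\le\frac{1}{c}\log n$, which after absorbing constants gives $d\le\log n+C'$'' is a non sequitur: a lower bound with a multiplicative constant $c<1$ yields only $d\le\frac{1}{c}\log n+1$, and a multiplicative loss cannot be absorbed into an additive constant. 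For (a) you correctly compute that the target $d\le\log n+\log 2+1$ would require $\psi(d-1)\ge(d-1)-\log 2$, you observe that this does not follow from Proposition~\ref{prop1} because $\varepsilon(m)\,m\to\infty$, and you then stop without resolving the issue; your fallback of ``treating the at-most-finitely-many exceptions by enlarging $n_0$'' does not work, because the set of $m$ with $\psi(m)<m-\log 2$ is infinite (Littlewood's oscillation theorem gives $\psi(m)-m=\Omega_{-}(\sqrt{m}\log\log\log m)$, so taking $n=lcm(1,\dots,m)$ for such $m$ produces arbitrarily large counterexamples). So as written the proposal establishes (c) but not (a) or (b).

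For what it is worth, the obstruction you ran into is genuine and not a failure of ingenuity on your part. The paper's own proof closes this gap by asserting that Corollary~\ref{prop3} yields $\psi(m)\ge m+\log(1-\varepsilon)=m-\log(2)$ for all $m\ge m_0$; but Corollary~\ref{prop3} only gives $\psi(m)>(1-\varepsilon)m$, and the additive form $\psi(m)\ge m-\log 2$ is precisely the statement you (rightly) could not derive --- indeed it fails for infinitely many $m$ by the oscillation result just quoted. What the shared skeleton does legitimately deliver is $d(n)\le(1+o(1))\log n$, i.e.\ $d(n)\le\log n+o(\log n)$; and for the special values $n=n_i$ one gets $d(n_i)=\log(n_i)+o(\log(n_i))$, since $d(n_i)$ is the least prime power exceeding $i$ and $\log n_i=\psi(i)\sim i$. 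That weaker form suffices for the asymptotic statements in Theorem~\ref{t:main2}(b) and Corollary~\ref{cor:main3}, but not for the additive-constant bounds in parts (a) and (b) of the lemma as literally stated. If you want a defensible write-up, prove (c) as you did, replace (a) and (b) by the $o(\log n)$ versions, and flag explicitly that an absolute additive constant would require a lower bound of the form $\psi(m)\ge m-O(1)$, which is not available.
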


\begin{proof}

We first establish part (b).
Let $n\ge 2$ and let $d\ge 2$ be the smallest integer such that $d\nmid n$.  Then for $i=1,\dots, d-1$, we have $i|n$, and hence $lcm(1,2,\dots,d-1) | n$. Therefore, $lcm(1,2,\dots,d-1) \leq n$. Denote $m=d-1$ and let $\varepsilon=\frac{1}{2} \in (0, 1)$. Thus, $lcm(1,2,\dots, m)\le n$.
For the second Chebyshev function $\psi(m)=\log[lcm(1,2,\dots,m)]$, Corollary~\ref{prop3}  implies that there exists an integer $m_0\ge 1$ such that for all $m\ge m_0$, we have $\log[lcm(1,2,\dots,m)]\ge m+\log(1-\varepsilon)=m-\log(2)$. Choose an integer $n_0\ge 1$ such that $m_0\le \log(n_0)$.

We proceed by breaking into two cases. 

First, suppose that $m=m(n)\ge m_0$. 
Then \[m-\log(2)\le \log[lcm(1,2,\dots,m)]\le \log(n).\]
Hence, $m\le \log(n)+\log(2)$. Since $m=d-1$, it follows that $d\le \log(n)+\log(2)+1$. 

Suppose now that $m=m(n)\le m_0$. Then $d=m+1\le m_0+1$.

That in both cases for all $n\ge 2$, we have $d\le \log(n)+\log(2)+1+m_0$. Thus, part (b) is established with $C'=\log(2)+1+m_0$.

We now establish part (a). Assume now that $n\ge n_0$. If $m=m(n)\ge m_0$, then we have $d\le \log(n)+\log(2) +1$ by the argument above, as required. Thus, suppose that $m<m_0$. Hence, $d=m+1\le m_0$. Recall that $n_0$ was chosen so that $m_0\le \log(n_0)$. Thus, in this case
\[
d\le m_0\le \log(n_0)\le \log(n)\le \log(n)+\log(2)+1.
\]
Hence, the conclusion of part (a) is established, as required.

Now let $i\ge 2$ and let $n_i=lcm(1,2,\dots, i)$. Let $d=d(n_i)\ge 2$ be the smallest integer such that $d\nmid n_i$. Since $1,2,\dots,i | n_i$ and $n_i=lcm(1,2,\dots,i)$, it follows that $d\ge i+1$.

Corollary~\ref{prop3}  implies that $\log(n_i)=\log[lcm(1,2,\dots,i)]=i+o(i)$. In particular, for all sufficiently large $i$, we have
\[
\frac{i}{2}\le \log(n_i) \le 2i.
\]
Therefore, $o(i)=o(\log(n_i))$ and $\log(n_i)=i+o(\log(n_i))$. Hence, 
\[
d\ge i+1\ge i=\log(n_i) - o(\log(n_i)),
\] 
and part (c) holds, as required.

\end{proof}

\section{Main Results}
Let $F_N=F(a_1,\dots,a_N)$ be the free group consisting of $N\ge2$ generators with the free basis $A=\{a_1,\dots,a_N\}$. For the remainder of this section, one of the primary objects under investigation will involve the free group $F_2$ where $N=2$. 
In this case, we denote $A=\{a,b\}$ and $F_2=F(a,b)=F(A)$.

Let $R_2$ be the 2-rose, that is an $A$-graph with a single vertex $v_0$ and two positively oriented petal-edges at $v_0$ labelled $a$ and $b$ accordingly. Then there is a natural identification $F(a,b)=\pi_1(R_2,v_0)$, and finite index subgroups of $F(a,b)$ correspond to finite connected basepointed covers of $R_2$. That is, every subgroup $H\le F(a,b)$ of finite index $q$ is uniquely represented by a $q$-fold cover $f:(\Gamma,x_0)\to (R_2,v_0)$ where $\Gamma$ is a finite connected folded 4-valent $A$-graph. In this case, we have an isomorphism $f_\#: \pi_1(\Gamma,x_0)\to H\le \pi_1(R_2,v_0)$ given by reading the labels of closed paths in $\Gamma$ at $x_0$. 

Recall that if $\Gamma$ is a finite connected $A$-graph with a base-vertex $x_0$ and $T$ is a maximal subtree of $\Gamma$ then $T$ defines a \emph{dual} free basis $S_T$ of $\pi_1(\Gamma,x_0)$ as follows.  Let $E'$ be the set of those oriented edges of $\Gamma-T$ that are labeled by elements of $A$ (rather than of $A^{-1}$). For each $e\in E'$ put $\beta_e=[x_0,o(e)]_Te[t(e),x_0]_T$. Then $S_T=\{\beta_e|e\in E'\}$. Note that if $\Gamma$ is folded then $\mu(S_T)$ is a free basis of the subgroup $H$ of $F(A)$ represented by $(\Gamma,x_0)$; this basis is also referred to as \emph{dual} to $T$.  See \cite[Section 6]{KM02} for more details.

The following lemma was suggested to us by the referee as for simplifying of our original, more topological, argument for proving part (1) of Theorem~\ref{t:main2}.

\begin{lemma}\label{lem:1}
Let $d\ge 2$ be an integer. 

(1) There exists a subgroup $H\le F_2$ with $[F_2:H]=d$ such that $H$ admits a free basis $Y=\{y_0,\dots, y_{d}\}$ where $y_0=a^d$, $y_d=b^d$ and $y_i=a^ib^i$ for $i=1,\dots, d-1$. 

(2) The subgroup $H$ from part (1) is equal to the kernel of the homomorphism $\phi:F(a,b)\to \mathbb Z_d$ given by $\phi(a)=[1]_d$ and $\phi(b)=[-1]_d$.
\end{lemma}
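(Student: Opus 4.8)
The plan is to take $H$ to be $\ker\phi$ from the very beginning, where $\phi\colon F(a,b)\to\mathbb Z_d$ is the homomorphism of part (2); then (2) holds by construction, and $[F_2:H]=d$ because $\phi(a)=[1]_d$ generates $\mathbb Z_d$, so $\phi$ is onto. A one-line computation gives $\phi(a^d)=\phi(b^d)=[0]_d$ and $\phi(a^ib^i)=i[1]_d+i[-1]_d=[0]_d$, so $Y\subseteq H$, i.e.\ $\langle Y\rangle\le H$. To finish part (1) I would produce a free basis of $H$ of cardinality $d+1$, every element of which lies in $\langle Y\rangle$; this forces $H=\langle Y\rangle$, and then $Y$, being a $(d+1)$-element generating set of the free group $H$ of rank $d+1$, is automatically a free basis (free groups are Hopfian), which proves part (1).

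To obtain such a basis I would use the dual basis construction recalled just before the lemma (see also \cite[Section 6]{KM02}). Represent $H$ by the $d$-fold cover $(\Gamma,x_0)\to(R_2,v_0)$: the graph $\Gamma$ has vertex set $\mathbb Z_d$, with an $a$-edge $i\to i+1$ and a $b$-edge $i\to i-1$ for every $i\in\mathbb Z_d$, and $x_0=[0]_d$. Taking $T$ to be the spanning tree formed by the $a$-edges $[0]_d\to[1]_d\to\cdots\to[d-1]_d$, there are exactly $d+1$ positively labeled non-tree edges (the single remaining $a$-edge and all $d$ of the $b$-edges), and reading off $\mu(S_T)$ yields the free basis of $H$
\[
u_0=a^d,\qquad u_i=a^i b\,a^{1-i}\ \ (1\le i\le d-1),\qquad u_d=b\,a^{1-d}.
\]
(These are just the Reidemeister--Schreier generators associated with the transversal $\{1,a,\dots,a^{d-1}\}$ of $H$.)

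It then remains to check $u_j\in\langle Y\rangle$ for every $j$, which I would do by telescoping. Directly from the definitions $u_0=y_0$ and $u_1=ab=y_1$, while for $2\le i\le d-1$
\[
u_i=(a^ib^i)(a^{i-1}b^{i-1})^{-1}=y_iy_{i-1}^{-1}\in\langle Y\rangle .
\]
For $u_d$ I would use the identity $u_d u_{d-1}\cdots u_2 u_1=b^d=y_d$, which follows from the one-line induction $u_d u_{d-1}\cdots u_{d-k+1}=b^k a^{\,k-d}$ (the case $k=d$ being $b^d$); hence $u_d=y_d\,u_1^{-1}u_2^{-1}\cdots u_{d-1}^{-1}\in\langle Y\rangle$. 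Thus $\{u_0,\dots,u_d\}\subseteq\langle Y\rangle$, so $H=\langle u_0,\dots,u_d\rangle\le\langle Y\rangle\le H$, giving $\langle Y\rangle=H$, and part (1) follows as explained.

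The only step that is not purely mechanical is recognizing in advance that $Y$ cannot be realized as the dual basis of $\Gamma$ for any single spanning tree --- having $a^d$ among the dual-basis elements forces $T$ to be the chain of $a$-edges, while having $b^d$ forces $T$ to be the chain of $b$-edges, and these are different trees --- so some nontrivial change of basis is unavoidable. Once the cyclic structure of $\Gamma$ is written down, that change of basis boils down to the short identities above, of which $b^d=(b\,a^{1-d})(a^{d-1}b\,a^{2-d})\cdots(a^2b\,a^{-1})(ab)$ is the least transparent; its verification is the main (and still short) computation.
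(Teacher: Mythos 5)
Your proposal is correct and follows essentially the same route as the paper: both use the $d$-fold cyclic cover with vertex set $\mathbb Z_d$, the same spanning tree of $a$-edges, the same dual basis $u_i=a^iba^{1-i}$ (the paper's $z_i$), and the same telescoping identities $u_i\cdots u_1=a^ib^i$ and $u_d\cdots u_1=b^d$. The only differences are cosmetic: you define $H$ as $\ker\phi$ up front and conclude that $Y$ is a basis via the cardinality-plus-Hopficity argument, whereas the paper builds $Y$ from $Z$ by explicit Nielsen transformations and then deduces $H=\ker\phi$ by comparing indices.
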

\begin{proof}
Take two simplicial cycles of length $d$ in the plane. Call one cycle $\Delta_1$ with edges labelled by $a$ flowing counterclockwise, and denote the other cycle by $\Delta_2$ with edges labelled by $b$ flowing clockwise.  We then superimpose $\Delta_2$ on $\Delta_1$ by a Euclidean translation and identify their vertex sets. This process results in a graph $\Gamma$ as in Figure \ref{fig2}.  Thus, $\Gamma$ is a folded connected $A$-graph with $d$ vertices with the property that for every two vertices, $v$ and $v'$, with an edge labelled by $a$ from $v$ to $v'$, there is an edge labelled by $b$ going from $v'$ to $a$ in $\Gamma$.  We still denote the (embedded) images of the $d$-cycles $\Delta_1$  and $\Delta_2$ in $\Gamma$ by $\Delta_1$  and $\Delta_2$. 
We mark one vertex $x_0$ of $\Gamma$ as a base-vertex, which defines a basepointed immersion $f:(\Gamma,x_0)\to (R_2,v_0)$. Note that $\Gamma$ is 4-regular, so that $f$ is in fact a covering map of degree $d=\#V\Gamma$. Thus, $(\Gamma,x_0)$ represents a subgroup $H$ of index $d$ in $F_2=F(a,b)$.

For $i=0,1,\dots,d-1$, we denote by $x_i$ the vertex of the $d$-cycle $\Delta_1$ labelled by $a^d$ at distance $i$ from $x_0$ along $\Delta_1$ in the direction of the flow of $\Delta_1$. That is, $x_i$ is the endpoint of the path in $\Gamma$ labelled by $a^i$.

Consider a maximal tree $T$ in $\Gamma$ consisting of the cycle $\Delta_1$ with the last edge removed. For the dual basis $S_T$ of $\pi_1(\Gamma,x_0)$ the corresponding basis $Z=\mu(S_T)$ of $H$ is $z_0, z_1,\dots, z_d$ where $z_0=a^d$,  $z_i=a^iba^{-(i-1)}$ for $i=1,2,\dots, d-1$, and $z_d=ba^{-(d-1)}$.

Note that $y_i=z_iz_{i-1}\dots z_2z_1=a^iba^{-(i-1)} a^{i-1}ba^{-(i-2)} \dots ab=a^ib^i$ for $i=1,\dots, d-1$. Replacing $z_1,\dots, z_{d-1}$ by $y_1,\dots, y_{d-1}$ in $Z$ corresponds to a sequence of Nielsen transformations and therefore $Y'=z_0, y_1,\dots, y_{d-1}, z_d$ is a free basis of $H$. Note now that $z_dy_{d-1}=ba^{-(d-1)}a^{d-1}b^{d-1}=b^d$. Replacing the element $z_d$ in $Y'$ by $z_dy_{d-1}=b^d$ is a Nielsen transformation which produces a free basis $Y=\{y_0,y_1,\dots, y_d\}$ of $H$ with $y_0=a^d$, $y_d=b^d$, and $y_i=a^ib^i$ for $i=1,\dots, d-1$, and part (1) of the lemma is established.

Now consider a surjective homomorphism $\phi:F(a,b)\to \mathbb Z_d$ given by $\phi(a)=[1]_d$ and $\phi(b)=[-1]_d$. Then $\phi(a^d)=\phi(b^d)=\phi(a^ib^i)=[0]_d$, where $i=1,\dots, d-1$. Therefore, by part (1) of the lemma, $H\le \ker(\phi)$. Since both $H$ and $\ker(\phi)$ have index $d$ in $F(a,b)$, it follows that $H=\ker(\phi)$, and part (2) of the lemma is verified.
\end{proof}

We are now ready to state and prove the first of our main results: an upper bound for $d_{prim}(w_n)$.

\begin{theorem}\label{thm1}
There exists a constant $C'\ge 0$ such that for all integers $n\ge 1$,
\[
			d_{prim}(a^n b^n; F_2)\le \log(n)+C'.
\]
\end{theorem}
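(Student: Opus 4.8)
The plan is to combine Lemma~\ref{lem:1} with the number-theoretic estimate in part (b) of Lemma~\ref{lem2}. Fix $n\ge 1$. For small $n$ (say $n\le 2$) the inequality is trivial after enlarging $C'$, since $d_{prim}$ is always finite; so I would assume $n\ge 3$, which by Lemma~\ref{lem1} guarantees $1<d(n)<n$. Set $d=d(n)$, the smallest integer $\ge 2$ that does not divide $n$. The key point is that since $d$ does not divide $n$ but every integer $1,2,\dots,d-1$ does, the element $w_n=a^nb^n$ lies in the subgroup $H\le F_2$ of index $d$ furnished by Lemma~\ref{lem:1}, and moreover it is primitive there.

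The main step is verifying primitivity of $w_n$ in $H$. Lemma~\ref{lem:1} gives a free basis $Y=\{y_0,\dots,y_d\}$ of $H$ with $y_0=a^d$, $y_d=b^d$, and $y_i=a^ib^i$ for $i=1,\dots,d-1$; equivalently, $H=\ker(\phi)$ where $\phi(a)=[1]_d$, $\phi(b)=[-1]_d$. Since $\phi(w_n)=\phi(a^n)\phi(b^n)=[n]_d+[-n]_d=[0]_d$, indeed $w_n\in H$. To see that $w_n$ is primitive in $H$, I would express $w_n$ in terms of the basis $Y$ and show that some basis letter occurs exactly once (Remark~\ref{rmk2}). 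Write $n=q(d-1)+r$ with $0\le r<d-1$; note $r\ne 0$ is impossible only in the degenerate sense, so more carefully: by definition of $d$, all of $1,\dots,d-1$ divide $n$, so $\mathrm{lcm}(1,\dots,d-1)\mid n$; in particular $(d-1)\mid n$, say $n=(d-1)s$. Then $a^n=(a^{d-1})^{s}$ and one checks $a^{d-1}b^{d-1}=y_{d-1}$, $a^{d-2}b^{d-2}=y_{d-2}$, etc., so that the word $a^nb^n$ can be rewritten as a product of the $y_i$'s in which, say, $y_0=a^d$ appears zero times and exactly one of the ``middle'' generators appears once. The cleanest route is probably: since $(d-1)\mid n$ but also we want to peel off one copy of $a^{d-1}b^{d-1}=y_{d-1}$ and leave a remainder that, after free reduction, involves only $a^d=y_0$ and powers of $y_{d-1}$ — but that double-counts $y_{d-1}$. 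A better approach is direct: in the covering graph $\Gamma$ of Lemma~\ref{lem:1}, the loop at $x_0$ labelled $a^nb^n$ reads off an element of $\pi_1(\Gamma,x_0)$; choosing the maximal tree $T$ from the proof of Lemma~\ref{lem:1} and expanding $w_n$ in the dual basis $Z=\{z_0,\dots,z_d\}$ (with $z_0=a^d$, $z_i=a^iba^{-(i-1)}$, $z_d=ba^{-(d-1)}$), one sees which $z_i$ occur and with what multiplicity; then transport through the Nielsen transformations to $Y$. The hard part will be carrying out this bookkeeping cleanly enough to exhibit a once-occurring generator — this is the crux of the argument and the one place where a careful (but elementary) computation is unavoidable.

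Granting that $w_n$ is primitive in $H$, we conclude $d_{prim}(a^nb^n;F_2)\le [F_2:H]=d=d(n)$. Then part (b) of Lemma~\ref{lem2} gives $d(n)\le \log(n)+C'$ for all $n\ge 2$ (and trivially we can absorb $n=1$ into the constant), which yields exactly the claimed bound
\[
d_{prim}(a^nb^n;F_2)\le \log(n)+C'.
\]
I expect the only genuine obstacle to be the primitivity verification in step two; everything else is assembly of already-established lemmas. One alternative to the explicit Whitehead/Nielsen computation, which might be cleaner, is to invoke Proposition~\ref{p:pr} after first exhibiting $w_n$ inside a rank-$2$ free factor of $H$ on which it is visibly primitive — but establishing that such a free factor exists seems to require essentially the same computation, so I would default to the direct approach via Remark~\ref{rmk2}.
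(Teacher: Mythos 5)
Your overall strategy is exactly the paper's: pass to the index-$d(n)$ subgroup $H$ of Lemma~\ref{lem:1}, show $w_n=a^nb^n$ is primitive there via Remark~\ref{rmk2}, and then invoke part (b) of Lemma~\ref{lem2}. However, you explicitly leave the primitivity verification --- which you correctly identify as the crux --- uncompleted, and the route you sketch for it (dividing $n$ by $d-1$ and trying to peel off copies of $y_{d-1}$, or falling back on bookkeeping in the dual basis $Z$ followed by Nielsen transformations) does not lead anywhere clean, as you yourself observe when you note the double-counting problem. That is a genuine gap: as written, the proof is not finished.

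The missing idea is simply to divide $n$ by $d$ itself rather than by $d-1$. Write $n=kd+r$; since $d\nmid n$ the remainder satisfies $1\le r\le d-1$ (and $k\ge 1$ once $n\ge 3$, by Lemma~\ref{lem1}). Then
\[
w_n=a^nb^n=(a^d)^k\,a^rb^r\,(b^d)^k=y_0^k\,y_r\,y_d^k,
\]
which is already a word in the basis $Y$ of $H$ in which the generator $y_r$ (one of the ``middle'' generators $a^rb^r$, since $1\le r\le d-1$) occurs exactly once. Remark~\ref{rmk2} then gives primitivity of $w_n$ in $H$ immediately, with no Whitehead-graph or Nielsen computation needed. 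Everything else in your write-up (membership of $w_n$ in $H$ via the kernel description, absorbing small $n$ into the constant, and the final appeal to Lemma~\ref{lem2}(b)) is correct and matches the paper.
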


\begin{proof}
Let $n\ge 1$ and consider the word $w_n=a^n b^n\in F_2=F(a,b)$. 

Let $d=d(n)\ge 2$ be the smallest integer that does not divide $n$. Lemma \ref{lem1} implies that $2\le d<n$. Also, by Lemma \ref{lem2}, we have $d\le \log(n)+C'$. Express $n$ as $n=kd+r$ where $k\ge1$ and $0<r<d$ is the remainder.

Let $H$ be the subgroup of index $d$ in $F_2$ provided by Lemma~\ref{lem:1} with the free basis $Y=\{y_0,y_1,\dots, y_d\}$, where $y_0=a^d$, $y_d=b^d$ and $y_i=a^ib^i$ for $i=1,\dots, d-1$. 

Note that 

\[
w_n=a^nb^n=(a^d)^ka^rb^r(b^d)^k=y_0^ky_ry_d^k\in H.
\]

The element $w_n=y_0^ky_ry_d^k$ in primitive in $H=F(Y)$ because the generator $y_r$ of $H$ occurs exactly once in $w_n$. Hence,
\[
d_{prim}(w_n;F_2)\le d \le \log(n)+C',
\]
as claimed.

\end{proof}

\begin{figure}
    \centering
    
    \includegraphics{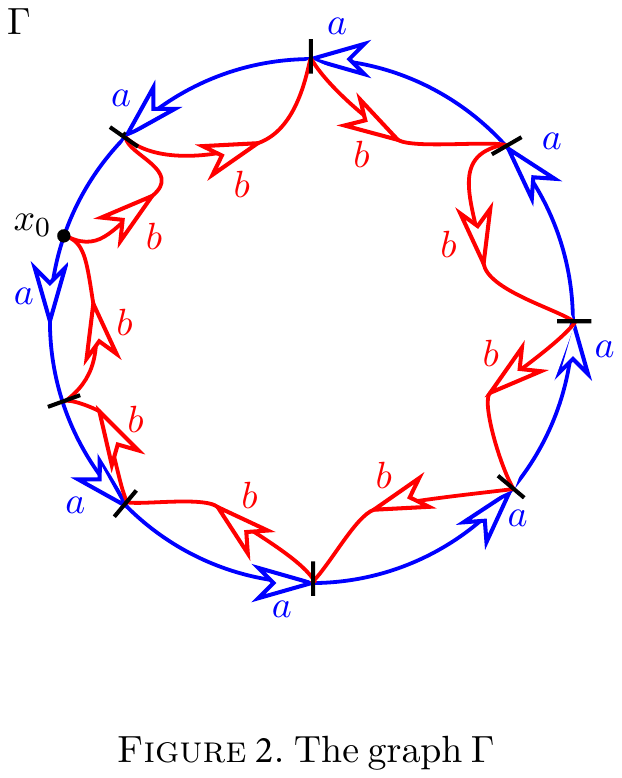}
    
    \caption{The graph $\Gamma$}
    \label{fig2}
\end{figure}

\begin{figure}
    \centering
    
    \includegraphics{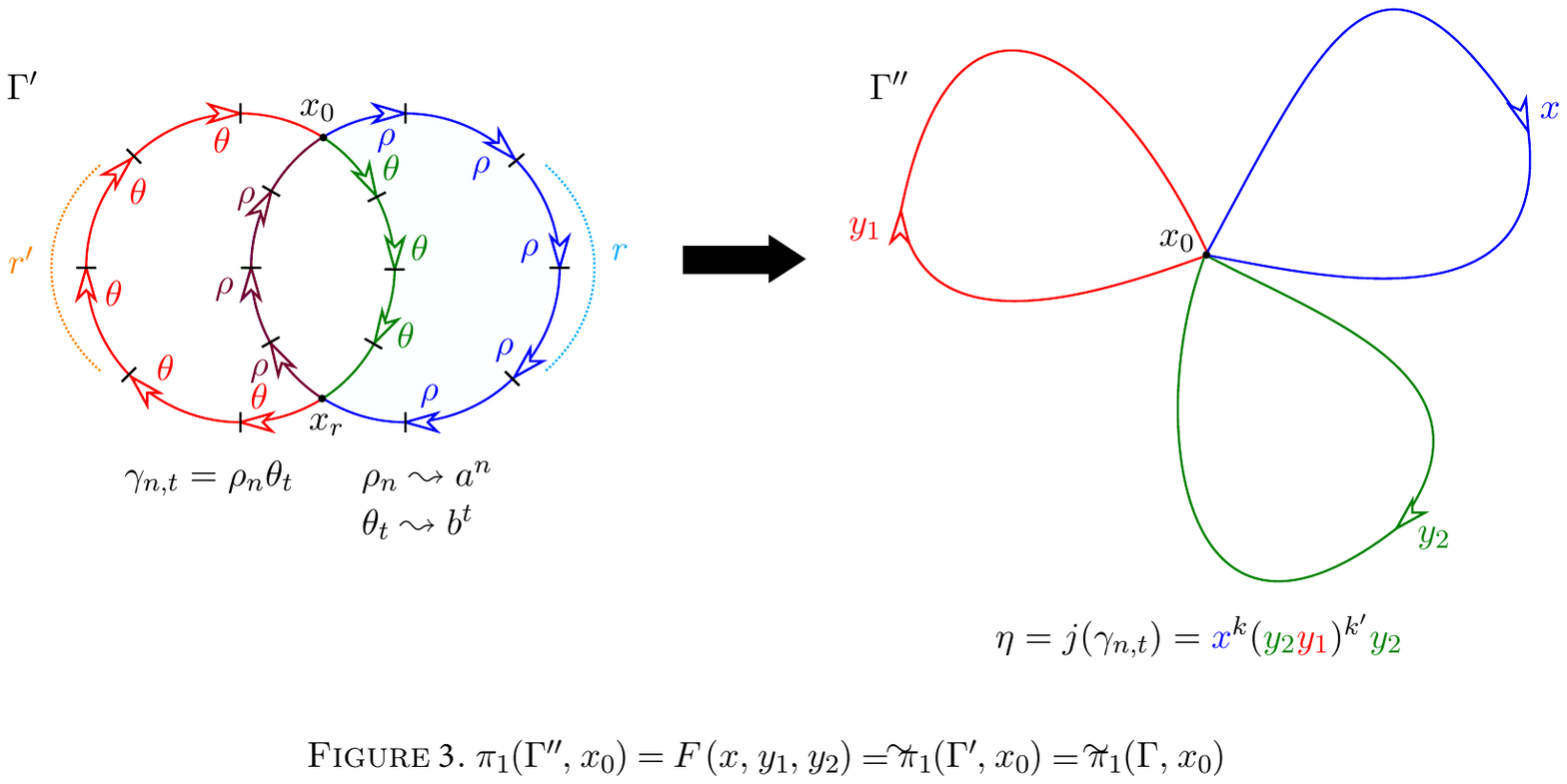}
    
    \caption{$\pi_1(\Gamma'',x_0)=F(x,y_1,y_2)\cong \pi_1(\Gamma',x_0)\cong \pi_1(\Gamma,x_0)$}
    \label{fig3}
\end{figure}

\begin{proposition}\label{prop4}
Let $n,t\ge 1$ and let $d,d'\ge 2$ be integers such that $d\nmid n$ and $d'\nmid t$, and that $d\le n, d'\le t$. Then  
\[
d_{simp}(a^n b^t; F_2)\le d_{prim}(a^n b^t; F_2)\le d+d'-2.
\]
\end{proposition}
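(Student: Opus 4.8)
The plan is to adapt the construction behind Lemma~\ref{lem:1} and Theorem~\ref{thm1} to the case of unequal exponents, producing an explicit subgroup $H\le F_2$ of index exactly $d+d'-2$ in which $a^nb^t$ is primitive; the inequality $d_{simp}\le d_{prim}$ is automatic by Remark~\ref{rmk1}. First I would record the elementary arithmetic. Since $d\le n$ and $d\nmid n$ we have $n>d$, so writing $n=kd+r$ with $0\le r<d$ forces $0<r<d$; and since $n-r$ is a positive multiple of $d$ we get $k\ge 1$. Likewise $t=k'd'+r'$ with $0<r'<d'$ and $k'\ge 1$. Thus $a^nb^t=(a^d)^k\,(a^rb^{r'})\,(b^{d'})^{k'}$, and the whole point will be to find $H$ of index $d+d'-2$ that admits a free basis containing $a^d$, $b^{d'}$ and $a^rb^{r'}$ simultaneously: then $a^nb^t$ is expressed as a word in that basis in which the generator $a^rb^{r'}$ occurs exactly once, and primitivity follows from Remark~\ref{rmk2}.

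To construct $H$, I would take an $a$-labeled simplicial circle $A$ with cyclically ordered vertices $u_0,\dots,u_{d-1}$ and $a$-edges $u_i\to u_{i+1}$ (subscripts mod $d$), together with a $b$-labeled circle $B$ with vertices $v_0,\dots,v_{d'-1}$ and $b$-edges $v_j\to v_{j+1}$ (subscripts mod $d'$), and glue them by identifying $u_0=v_0=:x_0$ together with $u_r=v_{d'-r'}$. Because $0<r<d$ and $0<r'<d'$, these are two genuinely distinct identifications, so the resulting $A$-graph $\Gamma$ has exactly $d+d'-2$ vertices. I would then turn $\Gamma$ into a $4$-regular graph by attaching one $b$-labeled loop at each vertex $u_i$ with $i\notin\{0,r\}$ and one $a$-labeled loop at each vertex $v_j$ with $j\notin\{0,d'-r'\}$. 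A short check over the four resulting types of vertices shows $\Gamma$ is folded; it is plainly connected and $4$-regular, so $(\Gamma,x_0)$ represents a subgroup $H\le F_2$ with $[F_2:H]=\#V\Gamma=d+d'-2$. Reading $a^nb^t$ as an edge path from $x_0$: the $a^n$-part winds around $A$ and ends at $u_{n\bmod d}=u_r=v_{d'-r'}$, after which the $b^t$-part winds around $B$ and ends at $v_{(d'-r'+t)\bmod d'}=v_0=x_0$; hence $a^nb^t\in H$.

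It then remains to exhibit the desired free basis. I would choose the maximal subtree $T\subseteq\Gamma$ consisting of the $a$-edges $u_0\to u_1\to\cdots\to u_{d-1}$ together with the two $b$-arcs $v_0\to\cdots\to v_{d'-r'-1}$ and $v_{d'-r'}\to\cdots\to v_{d'-1}$, and compute the dual free basis $\mu(S_T)$ of $H$. Three of its members are $a^d$ (from the unused $a$-edge $u_{d-1}\to u_0$), $b^{d'-r'}a^{-r}$ (from $v_{d'-r'-1}\to v_{d'-r'}$), and $a^rb^{r'}$ (from $v_{d'-1}\to v_0$); the other members come from the loops. Since $(b^{d'-r'}a^{-r})(a^rb^{r'})=b^{d'}$, replacing $b^{d'-r'}a^{-r}$ by $b^{d'}$ is a single Nielsen transformation, so $H$ has a free basis $Y$ containing $a^d$, $b^{d'}$ and $a^rb^{r'}$. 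In $Y$ the element $a^nb^t=(a^d)^k(a^rb^{r'})(b^{d'})^{k'}$ is a freely reduced word (here $k,k'\ge 1$) in which $a^rb^{r'}$ appears exactly once, so $a^nb^t$ is primitive in $H$ by Remark~\ref{rmk2}. Therefore $d_{prim}(a^nb^t;F_2)\le[F_2:H]=d+d'-2$, and combining with $d_{simp}\le d_{prim}$ completes the proof.

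The main obstacle is the bookkeeping in the last step: one must check that the prescribed edges really form a maximal subtree and compute the three distinguished dual-basis elements with the correct orientations, while also verifying that the degenerate configurations cause no trouble. For instance, when $d=2$ (resp. $d'=2$) the circle $A$ (resp. $B$) carries no loop vertices at all, and when $r=1$ or $r'=1$ some of the tree arcs collapse to a point; in each such case the same basis formulas still hold, so the generic computation already covers everything.
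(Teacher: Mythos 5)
Your proposal is correct, and its core is the same as the paper's: the identical graph $\Gamma'$ (an $a^d$-circle and a $b^{d'}$-circle glued along $x_0=z_0$ and $x_r=z_{d'-r'}$, giving $d+d'-2$ vertices), the same division-with-remainder arithmetic forcing $k,k'\ge 1$ and $0<r<d$, $0<r'<d'$, and a completion to a degree-$(d+d'-2)$ cover. The only real divergence is in the final primitivity check: the paper invokes the abstract Marshall Hall completion, collapses the $a^r$-arc of $\Gamma'$ to get $\pi_1(\Gamma'',x_0)=F(x,y_1,y_2)$, writes the loop as $x^k(y_2y_1)^{k'}y_2$, applies the automorphism $y_1\mapsto y_2^{-1}y_1$ to reach $x^ky_1^{k'}y_2$, and then passes from the free factor $\pi_1(\Gamma',x_0)$ to $\pi_1(\Gamma,x_0)$ via Proposition~\ref{p:pr}; you instead make the completion explicit (loops at the remaining vertices, which you correctly verify yields a folded $4$-regular graph) and compute a dual basis of $H$ itself, performing one Nielsen move to obtain a basis containing $a^d$, $b^{d'}$ and $a^rb^{r'}$, so that $a^nb^t=(a^d)^k(a^rb^{r'})(b^{d'})^{k'}$ uses the generator $a^rb^{r'}$ exactly once. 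Your route is closer in spirit to the paper's Lemma~\ref{lem:1} and has the mild advantage of exhibiting the basis of $H$ explicitly and avoiding the appeal to Proposition~\ref{p:pr}, at the cost of the spanning-tree bookkeeping, which you carry out correctly including the degenerate cases.
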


\begin{proof}
Let $n,t\ge 1$ and consider the word $w_{n,t}=a^n b^t\in F_2=F(a,b)$. Recall that $d,d'\ge 2$ are integers such that $d\nmid n$, $d'\nmid t$ and that $d\le n, d'\le t$.

Since $d\nmid n$ and $d'\nmid t$, we then have $2\le d<n$, $2\le d'<t$. 

First, divide $n$ with remainder by $d$, and $t$ with remainder by $d'$. Thus, express $n$ and $t$ as $n=kd+r$ and $t=k'd'+r'$ where $k,k'\ge 0$ and $0<r<d$, $0<r'<d'$, respectively. Note that since $2\le d<n$ and  $2\le d'<t$, we actually have $k, k'\ge 1$. 

Let $\Delta_1$ be the simplicial cycle of length $d$ given from the $A$-graph structure by labeling it as an $a^d$-cycle. Similarly, let $\Delta_2$ be the simplicial cycle of length $d'$ endowed with the $A$-graph structure by labeling it as a $b^{d'}$-cycle. We pick a base-vertex $x_0$ on $\Delta_1$ and a base-vertex $z_0$ on $\Delta_2$. On the graph $\Delta_1$, let $x_r$ be a vertex such that the arc along $\Delta_1$ from $x_0$ labelled by $a^r$ ends at $x_r$. Let $z_{d'-r'}$ be the vertex of $\Delta_2$ such that the arc along $\Delta_2$ starting at $z_{d'-r'}$ and labelled by $b^{r'}$ ends at $z_0$. 

Next, identify $x_0$ with $z_0$, and $x_r$ with $z_{d'-r'}$.  Denote the resulting graph by $\Gamma'$; see Figure \ref{fig3}. In the graph $\Gamma'$, we still denote the image of the vertex $x_0$ (and $z_0$) by $x_0$, and we denote the image of the vertex $x_r$ by $x_r$.  
 
Thus, $\Gamma'$ is a connected folded $A$-graph with $\#V\Gamma'=d+d'-2$.
 
By a standard Marshall Hall Theorem proof argument, see \cite[Lemma~8.10]{KM02}, there exists a finite connected folded $A$-graph $\Gamma$ such that $V\Gamma=V\Gamma'$ and such that $\Gamma$ is a finite cover of $R_2$ of degree equal to  $\#V\Gamma=\#V\Gamma'=d+d'-2$. Let $H\le F_2=F(a,b)$ be the subgroup of index $d+d'-2$ corresponding to the cover $(\Gamma,x_0)\to (R_2,v_0)$.

Let $\rho_n$ be the path in $\Gamma'$ starting at $x_0$ and labelled by $a^n$. Since $n=kd+r$, the path $\rho_n$ ends at the vertex $x_r$.
Now let $\theta_t$ be the path in $\Gamma'$ starting at $x_r$ and labelled by $b^t$. Since $t=k'd'+r'$, the construction of $\Gamma'$ implies that $\theta_t$ ends at $x_0$. Thus, the path $\gamma_{n,t}=\rho_n\theta_t$ in $\Gamma'$ has label $w_{n,t}=a^nb^t$ and begins and ends at $x_0$.

Consider the subarc $\beta$ of $\rho_n$ from $x_0$ to $x_r$, labelled by $a^{r}$. The arc $\beta$ is shown in the blue in  Figure \ref{fig3}.

Notice that $T=\beta$ is a subtree of $\Gamma'$. Let $\Gamma''$ be the graph obtained from $\Gamma'$ by collapsing the arc $\beta$ to a point, where the image of the vertex $x_0$ in $\Gamma''$ will still be called $x_0$. Then the image of $\rho_n$ in $\Gamma'$ is a loop $x$ at $x_0$, the image of $\theta_t$ from $b$-edges, red and green, consists of two topological loops at $x_0$, denoted  $y_1$ and $y_2$,  respectively.

Let $j: \Gamma'\to\Gamma''$ be the map given by collapsing $\beta$. Thus, $j$ is a homotopy equivalence and $j_\#:\pi_1(\Gamma',x_0)\to\pi_1(\Gamma'',x_0)$ is an isomorphism. Then $\pi_1(\Gamma'',x_0)=F(x,y_1,y_2)\cong \pi_1(\Gamma'',x_0)$, and $S=\{x,y_1,y_2\}$ is a free basis of $\pi_1(\Gamma'',x_0)$.

For the loop $\gamma_{n,t}=\rho_{n}\theta_{t}$ at $x_0$ in $\Gamma'$ labelled by $a^{kd+r}b^{k'd'+r'}$, the collapsing map $j$ produces a closed path \[\eta=j(\gamma_{n,t})=x^{k} (y_2 y_1)^{k'} y_2\in F(x,y_1,y_2)=\pi_1(\Gamma'',x_0).\]

Now consider an automorphism $\varphi$ of the free group $F(x,y_1,y_2)$ defined by 
\[
\varphi: \{x \mapsto x, y_1 \mapsto y_2^{-1} y_1, y_2 \mapsto y_2 \}.
\]

Then $\varphi(x)=x$ and $\varphi(y_2 y_1)=y_2 y_2^{-1} y_1=y_1$ imply that \[\varphi(\eta)=\varphi(x^{k} (y_2 y_1)^{k'} y_2)=x^{k} y_1^{k'} y_2.\]

The freely reduced word $\varphi(\eta)$ is primitive in $F(x,y_1,y_2)$ since the letter $y_2^{\pm 1}$ occurs precisely once in this word. Then since $\varphi$ was an automorphism of $F(x,y_1,y_2)$, it follows that $\eta\in F(x,y_1,y_2)$ is primitive too. Thus, $\gamma_{n,t}$ is primitive in $\pi_1(\Gamma',x_0)$ since \[\pi_1(\Gamma'',x_0)=F(x,y_1,y_2)\cong \pi_1(\Gamma'',x_0)\cong \pi_1(\Gamma',x_0)\cong \pi_1(\Gamma,x_0).\] 

Since $\Gamma'$ is a subgraph of $\Gamma$, it follows that $\gamma_{n,t}$ is primitive in $\pi_1(\Gamma,x_0)$ as well. Since the covering map $\Gamma\to R_2$ induces an isomorphism $\pi_1(\Gamma,x_0)\cong H$ sending $\gamma_{n,t}$ to $w_{n,t}$, we conclude that $w_{n,t}\in H$ is primitive in $H$ as well. Since, as noted above, $[F_2:H]=\#V\Gamma=\#V\Gamma'=d+d'-2$, the definition of the primitivity index now implies that 
\[
d_{simp}(w_{n,t}; F_2)\le d_{prim}(w_{n,t}; F_2)\le d+d'-2,
\]
as required.
\end{proof}

\begin{corollary}\label{cor1}
Let $C'\ge 0$ and $n_0\ge 3$ be the constants from Lemma \ref{lem2}. Then we have the following:
\begin{enumerate}
\item[(a)] For all $n,t\ge n_0$, $d_{simp}(a^n b^t; F_2)\le d_{prim}(a^n b^t; F_2)\le \log(n)+\log(t)+2\log(2)$.
\item[(b)] For all $n,t\ge 3$, $d_{simp}(a^n b^t; F_2)\le d_{prim}(a^n b^t; F_2)\le \log(n)+\log(t)+2C'-2$.
\end{enumerate}
\end{corollary}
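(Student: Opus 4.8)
The plan is to deduce both parts directly from Proposition~\ref{prop4}, by choosing the integers $d,d'$ there to be the minimal non-divisors $d(n)$ and $d(t)$ from Convention~\ref{con1}, and then substituting the number-theoretic estimates of Lemma~\ref{lem2}.

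First I would fix integers $n,t\ge 3$ (which in case (a) is automatic, since $n,t\ge n_0\ge 3$) and set $d=d(n)$, $d'=d(t)$. By definition of $d(\cdot)$ we have $d,d'\ge 2$, with $d\nmid n$ and $d'\nmid t$. Since $n,t\ge 3$, Lemma~\ref{lem1} gives $d=d(n)<n$ and $d'=d(t)<t$; in particular $d\le n$ and $d'\le t$, so all hypotheses of Proposition~\ref{prop4} hold for the word $a^nb^t$. Applying that proposition yields
\[
d_{simp}(a^n b^t; F_2)\le d_{prim}(a^n b^t; F_2)\le d+d'-2=d(n)+d(t)-2.
\]

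It now remains to bound $d(n)+d(t)-2$. For part (b), I would invoke Lemma~\ref{lem2}(b), which applies to every integer $\ge 2$, to obtain $d(n)\le\log(n)+C'$ and $d(t)\le\log(t)+C'$; adding these and subtracting $2$ gives $d(n)+d(t)-2\le\log(n)+\log(t)+2C'-2$, which is the asserted bound. For part (a), assuming in addition $n,t\ge n_0$, I would instead use Lemma~\ref{lem2}(a) to get $d(n)\le\log(n)+\log(2)+1$ and $d(t)\le\log(t)+\log(2)+1$; summing and subtracting $2$ yields $d(n)+d(t)-2\le\log(n)+\log(t)+2\log(2)$, as required.

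I do not expect any genuine obstacle here: the corollary is essentially a bookkeeping combination of Proposition~\ref{prop4} with the two estimates in Lemma~\ref{lem2}. The only point that deserves a moment's care is verifying that the chosen $d,d'$ satisfy $d\le n$ and $d'\le t$ so that Proposition~\ref{prop4} genuinely applies; this is exactly Lemma~\ref{lem1} and is where the hypothesis $n,t\ge 3$ is used.
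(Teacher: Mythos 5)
Your proposal is correct and follows essentially the same route as the paper: set $d=d(n)$, $d'=d(t)$, use Lemma~\ref{lem1} to verify the hypotheses of Proposition~\ref{prop4}, and then substitute the bounds from Lemma~\ref{lem2}(b) for part (b) and Lemma~\ref{lem2}(a) for part (a). No gaps.
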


\begin{proof}
Let $n,t\ge 3$ be integers. Put $d=d(n)$ and $d'=d(t)$. Thus, $d,d'\ge 2$ are the smallest integers not dividing $n$ and $t$ accordingly.

By Lemma~\ref{lem1}, we have $1<d<n$ and $1<d'<t$.

By Lemma~\ref{lem2}, we have $d\le \log(n)+C'$ and $d'\le \log(t)+C'$. Hence, Proposition~\ref{prop4} implies that
\[
d_{simp}(a^n b^t; F_2)\le d_{prim}(a^n b^t; F_2)\le d+d'-2\le \log(n)+\log(t)+2C'-2,
\]
and part (b) holds, as required.

Suppose further that $n,t\ge n_0$. Then by Lemma~\ref{lem2}, we have $d\le \log(n)+\log(2)+1$ and $d'\le \log(t)+\log(2)+1$. Hence, by Proposition \ref{prop4}, we have

\[
d_{simp}(a^n b^t; F_2)\le d_{prim}(a^n b^t; F_2)\le d+d'-2\le \log(n)+\log(t)+2\log(2),
\]
and part (a) of the corollary holds, as required.
\end{proof}

The following lemma, suggested to us by the referee, provides a tool for estimating from below $d_{prim}(a^{n_i}b^{n_i}; F_2)$ in part (b) of Theorem~\ref{t:main2} that is simpler than our original approach.

\begin{lemma}\label{power} 
Let $H\le F_2$ be a subgroup of finite index $p\ge 1$. Let $1\le k\le p, 1\le l\le p$ be smallest positive integers such that $a^k\in H$ and $b^l\in H$. Then there exists a free basis $Y$ for $H$ such that $a^k, b^l\in Y$.
\end{lemma}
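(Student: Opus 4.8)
The plan is to work with the Stallings graph $(\Gamma,x_0)$ of $H$, a finite connected folded $4$-regular $A$-graph with $p$ vertices covering $R_2$. The key structural observation is that the $a$-labeled edges of $\Gamma$ partition its vertex set into disjoint oriented cycles (since every vertex has exactly one outgoing and one incoming $a$-edge in a $4$-regular folded $A$-graph), and similarly for the $b$-labeled edges. Let $C_a$ be the $a$-cycle through $x_0$: its length is exactly $k$, the smallest positive power of $a$ lying in $H$, because a closed $a^m$-path at $x_0$ exists iff $m$ is a multiple of the length of $C_a$. Likewise, let $C_b$ be the $b$-cycle through $x_0$, of length $l$. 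So the loops at $x_0$ reading $a^k$ and $b^l$ are precisely traversing $C_a$ once and $C_b$ once.

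First I would choose a maximal subtree $T$ of $\Gamma$ adapted to these two cycles: take $T$ to consist of the arc $C_a \setminus \{e_a\}$ (all of $C_a$ except one edge $e_a$) together with the arc $C_b \setminus \{e_b\}$ (all of $C_b$ except one edge $e_b$), plus additional edges to extend this to a spanning tree of $\Gamma$. One must check that $(C_a\setminus\{e_a\})\cup(C_b\setminus\{e_b\})$ is indeed a forest containing $x_0$, which is clear since removing one edge from each cycle leaves two arcs meeting (at most) at $x_0$; since $C_a\cap C_b$ need not be just $\{x_0\}$, a little care is needed — but in any case removing an edge from each cycle kills all cycles supported on $C_a\cup C_b$, so the union is a forest, and we extend it to a spanning tree $T$.

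Next I would examine the dual basis $S_T$ of $\pi_1(\Gamma,x_0)$ associated to $T$, as recalled in the excerpt: it has one generator $\beta_e = [x_0,o(e)]_T\, e\, [t(e),x_0]_T$ for each positively-oriented edge $e$ outside $T$. Among these, the generator $\beta_{e_a}$ corresponding to the omitted $a$-edge $e_a$ reads, under the labeling map $\mu$, exactly the word obtained by going around $C_a$, i.e. a word conjugate in the graph to $a^k$; by choosing the edge $e_a$ adjacent to $x_0$ (which we may, as $x_0 \in C_a$), the tree-paths $[x_0,o(e_a)]_T$ and $[t(e_a),x_0]_T$ lie along $C_a$ itself and the element $\mu(\beta_{e_a})$ is literally $a^k$. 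Similarly, choosing $e_b$ adjacent to $x_0$, we get $\mu(\beta_{e_b}) = b^l$. Since $\mu(S_T)$ is a free basis of $H$ containing both $a^k$ and $b^l$, setting $Y = \mu(S_T)$ finishes the proof.

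The main obstacle is the possible overlap of the cycles $C_a$ and $C_b$ beyond the basepoint: I need to ensure that the arcs $C_a\setminus\{e_a\}$ and $C_b\setminus\{e_b\}$ can be simultaneously chosen to be edge-disjoint (they automatically are, being made of different labels) and to form a forest, and that the omitted edges $e_a, e_b$ can each be taken incident to $x_0$ so that the corresponding dual generators are honestly $a^k$ and $b^l$ rather than conjugates. The cleanest way to handle this is to note that since $x_0$ lies on $C_a$, one of the two $a$-edges at $x_0$ is the "last" edge of the cycle $C_a$ when traversed from $x_0$; delete that one as $e_a$, so the remaining arc of $C_a$ is the embedded tree-path from $x_0$ back around to $o(e_a)$, and $\mu(\beta_{e_a}) = a^k$ exactly; do the symmetric thing for $b$. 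Then verify $(C_a\setminus\{e_a\}) \cup (C_b\setminus\{e_b\})$ contains no cycle (any cycle in an $A$-graph, being a nontrivial reduced loop, must use an $a$-edge and the two arcs together contain at most one full cycle's worth of $a$-edges minus one, hence no $a$-subcycle; similarly no $b$-subcycle; a mixed cycle is impossible in a forest of two arcs sharing only endpoints — argue by a short Euler-characteristic / degree count), extend to a spanning tree $T$, and conclude as above.
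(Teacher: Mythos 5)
There is a genuine gap at exactly the point you flag as needing ``a little care'': the set $(C_a\setminus\{e_a\})\cup(C_b\setminus\{e_b\})$ is \emph{not} a forest in general, and your Euler-characteristic hand-wave does not rescue it. If the cycles $C_a$ and $C_b$ share $j$ vertices, then $C_a\cup C_b$ has $k+l$ edges and $k+l-j$ vertices, so its first Betti number is $j+1$; removing one edge from each cycle leaves first Betti number $j-1$, which is positive whenever $j\ge 2$. Concretely, take $H=\ker\bigl(F(a,b)\to\mathbb{Z}_2\bigr)$ with $a,b\mapsto 1$: its Stallings graph has two vertices $x_0,y$ with $a$-edges $x_0\to y\to x_0$ and $b$-edges $x_0\to y\to x_0$, so $k=l=2$ and $C_a,C_b$ share both vertices. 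Here $C_a\setminus\{e_a\}$ and $C_b\setminus\{e_b\}$ are two edges joining $x_0$ to $y$, i.e.\ a bigon, not a forest. Worse, in this example \emph{no} choice of spanning tree (a single edge) makes both $a^2$ and $b^2$ dual generators: if $T$ is an $a$-edge then the $b^2$-loop crosses two non-tree edges and factors as a product of two dual generators, e.g.\ $b^2=(ba^{-1})(ab)$. So your strategy of realizing both elements simultaneously as dual generators of one tree cannot be repaired by a cleverer choice of $e_a,e_b,T$; an extra algebraic step is unavoidable. (Your claim that a cycle ``must use an $a$-edge'' is also false as stated, and the two arcs share interior vertices, not just endpoints.)

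This is precisely the case the paper isolates and handles differently. When $\gamma_a$ and $\gamma_b$ meet only at $x_0$ your argument is fine and agrees with the paper's Case~1. In the overlapping case the paper puts only the $a$-arc $a^{k-1}$ into the maximal tree, so that $a^k$ is a dual generator, and then observes that the arcs of $\gamma_b$ between successive intersection points with $\gamma_a$ contribute dual generators of the form $a^{s_{i-1}}b^{t_i}a^{-s_i}$ whose product telescopes to $b^l$; replacing the last of these by that product is a Nielsen transformation yielding a free basis containing both $a^k$ and $b^l$. You would need to add an argument of this kind (or some other Nielsen move) to close the gap; as written, the proof fails for any $H$ whose $a$- and $b$-cycles through the basepoint meet in two or more vertices.
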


\begin{proof}
Let $(\Gamma,x_0)$ be the Stallings subgroup graph representing $H$, that is, a $p$-fold cover of the 2-rose $R_2$ corresponding to $H$.
Let $\gamma_a$ be the closed path at $x_0$ in $\Gamma$ labelled by $a^k$, and let $\gamma_b$ be the closed path the closed path at $x_0$ in $\Gamma$ labelled by $b^l$. The minimality of choices of $k$ and $l$ implies that both $\gamma_a$ and $\gamma_b$ are simple closed circuits in $\Gamma$. 

There are two cases two consider. 

{\it Case 1.} The paths $\gamma_a$ and $\gamma_b$ share no vertices in common except for $x_0$.

In this case the subgraph $\Gamma'$ of $\Gamma$ spanned by $\gamma_a,\gamma_b$ is a wedge of two circles, labelled by $a^k$ and $b^l$. It is obvious that $\gamma_a,\gamma_b$ is a free basis of $\pi_1(\Gamma,x_0)$. We can extend this free basis to a free basis of $\pi_1(\Gamma',x_0)$ since $\Gamma'$ is a connected subgraph of $\Gamma$ and $\pi_1(\Gamma',x_0)$ is a free factor of $\pi_1(\Gamma,x_0)$. Therefore, the conclusion of of the lemma holds in this case.

{\it Case 2.} The paths $\gamma_a$ and $\gamma_b$ have at least one other vertex in common apart from $x_0$.

Let $x_0, x_1, x_2, \dots, x_m=x_0$ be all  the vertices of $\gamma_a$ in the order they occur along $\gamma_b$. Denote by $a^{s_i}$ the label of the segment of $\gamma_a$ from $x_0$ to $x_i$, and denote by $b^{t_i}$ the label of the segment of $\gamma_b$ from $x_{i-1}$ to $x_i$, where $i=1,\dots, m$.  By our construction we have $t_i>0$ and $s_{i-1}\ne s_i$ but we are not guaranteed that $s_{i-1}<s_i$.

Choose a maximal subtree $T$ of $\Gamma$ which contains the initial segment of $\gamma_a$ labeled by $a^{k-1}$. Let $Z$ be the free basis of $H$ dual to this maximal tree. The last edge of $\gamma_a$ labeled by $a$ shows that $z_0=a^k\in Z$.  

The arc of $\gamma_b$ from $x_0$ to $x_1$ produces the element $z_1=b^{t_1}a^{-s_1}\in Z$. The arc of $\gamma_b$ from $x_1$ to $x_2$ produces the element $z_2=a^{s_1}b^{t_2}a^{-s_2}\in Z$. The arc of $\gamma_b$ from $x_2$ to $x_3$ produces the element $z_3=a^{s_2}b^{t_3}a^{-s_3}\in Z$. Thus, for $i=2,\dots, m-1$ we get an element $z_{i}=a^{s_{i-1}}b^{t_{i}}a^{-s_i}\in Z$. Finally, the arc from $x_m$ to $x_0$ in $\gamma_b$ produces the element $z_m=a^{s_m}b^{t_m}\in Z$. Then 
\[
y=z_1z_2\dots z_m=b^{t_1}a^{-s_1} a^{s_1}b^{t_2}a^{-s_2} a^{s_2}b^{t_3}a^{-s_3}\dots a^{s_{m-1}}b^{t_{i}}a^{-s_m} a^{s_m}b^{t_m}=b^{t_1+\dots +t_m}=b^l.
\]

Replacing $z_m$ by $y=z_1z_2\dots z_m=b^l$ in $Z$ is a Nielsen transformation which produces a free basis $Y$ of $H$ that contains both $a^k$ and $b^l$, as required.

\end{proof}

We now proceed by finding a lower bound in the following theorem:

\begin{theorem}\label{thm2}
 For all integers $i\ge 3$, if $n_i=lcm(1,2,\dots,i)$, we have:
\[
d_{prim}(a^{n_i} b^{n_i}; F_2)\ge d(n_i)\ge \log(n_i)-o(\log(n_i)).
\]
\end{theorem}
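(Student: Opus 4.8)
The plan is to combine Lemma~\ref{power}, Corollary~\ref{cor2} (the Whitehead graph obstruction), and part (c) of Lemma~\ref{lem2} in the manner sketched in the introduction. Fix $i\ge 3$ and write $n=n_i=lcm(1,2,\dots,i)$ and $d=d(n)$. By Lemma~\ref{lem1} we have $2\le d<n$. It suffices to prove $d_{prim}(a^nb^n;F_2)\ge d$, since then part (c) of Lemma~\ref{lem2} gives $d_{prim}(a^nb^n;F_2)\ge d\ge \log(n_i)-o(\log(n_i))$, which is exactly the stated inequality (here we use $d=d(n_i)$ for $n=n_i$).

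To show $d_{prim}(a^nb^n;F_2)\ge d$, I argue by contradiction. Suppose $H\le F_2$ is a subgroup of finite index $m<d$ with $w_n=a^nb^n\in H$ and $w_n$ primitive in $H$. Let $k,l$ be the smallest positive integers with $a^k\in H$, $b^l\in H$; since $[F_2:H]=m$, a standard covering-space/pigeonhole argument (or the fact that the images of $a$ and $b$ in the permutation representation on $H$-cosets have order dividing $m$) gives $1\le k\le m$ and $1\le l\le m$, so in particular $k,l\le m<d$. Because $d$ is the \emph{smallest} integer not dividing $n$, every integer in $\{1,2,\dots,d-1\}$ divides $n$; since $k,l\le d-1$, we get $k\mid n$ and $l\mid n$. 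Hence $w_n=a^nb^n=(a^k)^{n/k}(b^l)^{n/l}$ with exponents $p:=n/k\ge 2$ and $q:=n/l\ge 2$ (these are $\ge 2$ because $k,l<d<n$, so $k,l\le n/2$; more carefully, $k\le m<d\le n-1<n$ forces $n/k\ge 2$, using $n\ge 3$ and $k\mid n$, $k\ne n$).

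By Lemma~\ref{power}, there is a free basis $Y$ of $H$ containing both $a^k$ and $b^l$. Extend $\{a^k,b^l\}$ to the free basis $Y$ and write $U=\langle a^k,b^l\rangle$, a free factor of $H$ of rank $2$ with free basis $\{a^k,b^l\}$. Then $w_n=(a^k)^p(b^l)^q\in U$ with $p,q\ge 2$, so by Corollary~\ref{cor2} (applied to the rank-$2$ free group $U$ with generators $a^k,b^l$), $w_n$ is not simple, and in particular not primitive, in $U$. Since $U$ is a free factor of $H$, Proposition~\ref{p:pr} shows $w_n$ is not primitive in $H$ either, contradicting our assumption. Therefore no such $H$ exists, so $d_{prim}(a^nb^n;F_2)\ge d=d(n_i)$, and combining with Lemma~\ref{lem2}(c) completes the proof.

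The only genuinely delicate point is the claim $k,l\le m$, i.e.\ that the smallest positive powers of $a$ and of $b$ lying in a finite-index subgroup $H$ are bounded by $[F_2:H]$; this is the hypothesis format of Lemma~\ref{power} and follows because in the action of $F_2$ on the $m$ cosets of $H$, the cycle of the permutation $a$ through the coset $H$ has length $k\le m$ (and similarly for $b$). Everything else is a routine assembly of the cited results, so I expect no real obstacle beyond making sure the edge cases $p,q\ge 2$ are correctly justified from $k,l<d\le n-1$ and $k\mid n$, $l\mid n$.
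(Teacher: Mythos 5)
Your proposal is correct and follows essentially the same route as the paper's own proof: bound $k,l\le m<d(n_i)$, deduce $k\mid n_i$ and $l\mid n_i$, apply Lemma~\ref{power} to get a free basis of $H$ containing $a^k$ and $b^l$, and then use Corollary~\ref{cor2} together with Proposition~\ref{p:pr} to rule out primitivity, finishing with Lemma~\ref{lem2}(c). The only cosmetic difference is that you phrase the argument as a contradiction and spell out the coset-permutation justification for $k,l\le m$, which the paper absorbs into the hypotheses of Lemma~\ref{power}.
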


\begin{proof}
Let $w_{n_i}=a^{n_i}b^{n_i}$ where $i\ge 3$. Put $d_i=d(n_i)$. Let $H\le F_2$ be a subgroup of finite index such that $w_{n_i}\in H$ and that $[F_2:H]=m<d_i$.
Let $1\le k\le m, 1\le l\le m$ be smallest positive integers such that $a^k\in H$ and $b^l\in H$. By Lemma~\ref{power} there exists a free basis $Y$ for $H$ such that $y_1=a^k$ and $y_2=b^l$ belong to $Y$. Since $k,l\le m<d(n_i)$, we have $k|n_i$ and $l\vert n_i$. Recall also that by Lemma~\ref{lem1} $1<d(n_i)<n_i$. Hence, $n=pk=ql$ with $p,q\ge 2$. Then in the basis $Y$ we have $w_{n_i}=y_1^py_2^q$. Since $p,q\ge 2$, the element $y_1^py_2^q$ is not primitive in $F(y_1,y_2)$ by Corollary~\ref{cor2}. Therefore, $w_{n_i}=y_1^py_2^q$ is not primitive in $H=F(Y)$ by Proposition~\ref{p:pr}.

The definition of the the primitivity index now implies that $d_{prim}(w_{n_i}; F(a,b))\ge d(n_i)$. Finally, Lemma~\ref{lem2} implies that $d(n_i)\ge \log(n_i)-o(\log(n_i))$.

\end{proof}

Now that we have established upper and lower bounds, Theorem \ref{thm1} and Theorem \ref{thm2} together imply the following corollary:

\begin{corollary}\label{cor3}
There exists a constant $C\ge 0$ such that for all integers $i\ge 1$, if $n_i=lcm(1,2,\dots,i)$, then
\[
\log(n_i)-o(\log(n_i))\le d_{prim}(a^{n_i} b^{n_i}; F_2)\le \log(n_i)+C.
\]
\end{corollary}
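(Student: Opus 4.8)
The plan is to obtain Corollary~\ref{cor3} as an immediate formal consequence of Theorem~\ref{thm1} and Theorem~\ref{thm2}, with only a little extra bookkeeping to cover the small values of $i$ and to package the error term in the required $o(\log n_i)$ form. First I would invoke Theorem~\ref{thm1}: it supplies a constant $C'\ge 0$ such that $d_{prim}(a^{n_i}b^{n_i};F_2)\le \log(n_i)+C'$ for \emph{all} $i\ge 1$, so the upper bound is already in hand with $C=C'$ and requires no case analysis. For the lower bound, Theorem~\ref{thm2} gives, for all $i\ge 3$, that $d_{prim}(a^{n_i}b^{n_i};F_2)\ge d(n_i)\ge \log(n_i)-o(\log(n_i))$, where the $o(\cdot)$ is the function $\alpha(\log(n_i))$ from Lemma~\ref{lem2}(c).

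Next I would address the finitely many excluded cases $i=1,2$. Here $n_1=1$ and $n_2=2$, so $\log(n_1)=0$ and $\log(n_2)=\log 2$; since $d_{prim}$ of a nontrivial element is always a positive integer (indeed $\ge 2$ for these cyclically reduced words of length $\ge 2$), the claimed inequality $\log(n_i)-o(\log(n_i))\le d_{prim}(a^{n_i}b^{n_i};F_2)$ holds trivially for $i=1,2$ regardless of how the $o(\cdot)$ function is normalized for small arguments — one simply extends $\alpha$ by a large enough constant on the initial segment, which does not affect its being $o(x)$ as $x\to\infty$. So the lower bound extends to all $i\ge 1$.

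Finally I would combine the two bounds. For every $i\ge 1$ we have simultaneously $\log(n_i)-o(\log(n_i))\le d_{prim}(a^{n_i}b^{n_i};F_2)$ and $d_{prim}(a^{n_i}b^{n_i};F_2)\le \log(n_i)+C$ with $C=C'$, which is exactly the asserted chain of inequalities. I do not anticipate any genuine obstacle: the only mild subtlety is making sure the asymptotic notation $o(\log(n_i))$ is interpreted consistently (as a single fixed function of $i$, namely the $\alpha(\log n_i)$ of Lemma~\ref{lem2}) on both ends, rather than as two unrelated vanishing quantities, and that its behavior on the small initial values $i=1,2,3$ is harmless — both points are routine. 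If one wanted an entirely uniform statement without invoking the $i=1,2$ triviality, one could alternatively just state the corollary for $i\ge 3$, but as written the extension to all $i\ge 1$ costs nothing.
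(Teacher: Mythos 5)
Your proposal is correct and matches the paper exactly: the paper derives Corollary~\ref{cor3} as an immediate combination of Theorem~\ref{thm1} (upper bound) and Theorem~\ref{thm2} (lower bound), with no written proof beyond that. Your extra care about the cases $i=1,2$ and the normalization of the $o(\log n_i)$ term is sound bookkeeping that the paper leaves implicit.
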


\begin{theorem}\label{thm4}
For all integers $n\ge 2$, we have
\[
d_{simp}(a^n b^n; F_2)=2.
\]
\end{theorem}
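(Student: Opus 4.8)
The plan is to prove the two inequalities $d_{simp}(a^nb^n;F_2)\le 2$ and $d_{simp}(a^nb^n;F_2)\ge 2$ separately. The upper bound is the easy direction and should follow immediately from earlier results: taking $d=d'=2$ in Proposition~\ref{prop4} is not quite legitimate since we need $2\nmid n$, so instead I would argue directly. For the upper bound, the cleanest route is to note that $w_n=a^nb^n$ cannot be primitive in $F_2$ itself when $n\ge 2$ (by Corollary~\ref{cor2} applied with $N=2$, $k_1=k_2=n\ge 2$, $w_n$ is not even simple in $F_2$), so $d_{simp}(w_n;F_2)\ge 2$ — wait, that is the lower bound. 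For the genuine upper bound, I would exhibit an explicit index-$2$ subgroup $H\le F_2$ in which $a^nb^n$ is simple. A natural candidate is $H=\langle a^2, b, aba^{-1}\rangle$ or, more symmetrically, the kernel of the map $F_2\to\mathbb{Z}_2$ sending $a\mapsto 1, b\mapsto 0$; but to handle the parity of $n$ uniformly it is cleaner to use the index-$2$ subgroup $H$ provided by Lemma~\ref{lem:1} with $d=2$, namely $H=\ker(\phi)$ where $\phi(a)=\phi(b)=[1]_2$ (equivalently $\phi(a)=[1]_2,\phi(b)=[-1]_2$), which has free basis $\{a^2, ab, b^2\}$. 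I would then write $w_n=a^nb^n$ in terms of this basis according to the parity of $n$: if $n$ is even, $w_n=(a^2)^{n/2}(b^2)^{n/2}$ lies in the free factor $\langle a^2,b^2\rangle$ of $H$ (proper since $ab\notin$ it), hence is simple in $H$; if $n$ is odd, $w_n=(a^2)^{(n-1)/2}(ab)(b^2)^{(n-1)/2}$ and the basis element $ab$ occurs exactly once, so $w_n$ is primitive, hence simple, in $H$. Either way $d_{simp}(w_n;F_2)\le [F_2:H]=2$.

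For the lower bound $d_{simp}(a^nb^n;F_2)\ge 2$, I must show $w_n=a^nb^n$ is not simple in $F_2$ for $n\ge 2$. This is exactly Corollary~\ref{cor2} with $N=2$, $k_1=k_2=n\ge 2$: the Whitehead graph $\Gamma_{w_n}$ is a $4$-cycle on $\{a^{\pm1},b^{\pm1}\}$ with no cut vertex, so by Proposition~\ref{prop5} $w_n$ is not simple in $F_2$. Hence no index-$1$ subgroup witnesses simplicity, so $d_{simp}(w_n;F_2)\ge 2$. Combining the two bounds gives $d_{simp}(a^nb^n;F_2)=2$ for all $n\ge 2$.

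The main (and only real) obstacle is the upper bound, specifically making sure the index-$2$ subgroup works for \emph{both} parities of $n$ in a single clean argument; the approach above via Lemma~\ref{lem:1} with $d=2$ handles this, since in that subgroup $w_n$ is always either a product of proper powers of two basis elements lying in a common proper free factor (the $n$ even case) or has a basis element appearing exactly once (the $n$ odd case). One should double-check that $\langle a^2, b^2\rangle$ is genuinely a \emph{proper} free factor of $H=F(a^2,ab,b^2)$ — it is, being generated by a subset of a free basis — so simplicity in $H$ holds in the even case. No number theory is needed here at all; this theorem is purely a direct computation, in contrast to Theorems~\ref{thm1} and~\ref{thm2}.
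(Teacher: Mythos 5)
Your proposal is correct and follows essentially the same route as the paper: the lower bound via Corollary~\ref{cor2} (the Whitehead graph of $a^nb^n$ is a circle with no cut vertex), and the upper bound via the index-$2$ subgroup of Lemma~\ref{lem:1} with basis $\{a^2,ab,b^2\}$, split by the parity of $n$. The paper handles the odd case by invoking Proposition~\ref{prop4} with $d=d'=2$, but its own closing remark records exactly your direct computation $w_n=(a^2)^{(n-1)/2}(ab)(b^2)^{(n-1)/2}$ as an equivalent alternative, so there is no substantive difference.
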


\begin{proof}
Let $n\ge 2$. Consider the word $w_n=a^n b^n\in F_2=F(a,b)$. In order to prove the result, we will break into two cases:

\begin{flushleft}
\emph{Case 1} ($n$ is odd):
\end{flushleft}

Put $d=d'=2$. Since $n\ge 2$ is odd, we have $n\ge 3$ and $d<n$. Thus, since $d\nmid n$, Proposition~\ref{prop4} implies that  $d_{simp}(a^n b^n; F_2)\le d+d'-2=2$.

\begin{flushleft}
\emph{Case 2} ($n$ is even):
\end{flushleft}

Thus, $n=2k$ for an integer $k\ge 1$.

For $d=2$ let $H$ be the subgroup of index $2$ in $F_2=F(a,b)$, provided by Lemma~\ref{lem:1}, with the free basis $Y=\{y_0,y_1,y_2\}$ where $y_0=a^2$, $y_1=ab$, and $y_2=b^2$. 

We have

\[
w_n=a^nb^n=(a^2)^k(b^2)^k=y_0^ky_2^k\in H.
\]
The element $w_n= y_0^ky_2^k$ is simple in $H$ since $w_n$ does not involve the generator $y_1$ and thus $w_n$ belongs to a proper free factor $F(y_0,y_2)$ of $H=F(Y)$. Hence, $d_{prim}(w_n; F_2)\le 2$. 

Finally, Corollary~\ref{cor2} implies that for $n\ge 2$ the element $w_n=a^nb^n$ is not simple in $F(a,b)$, so that $d_{simp}(a^n b^n; F_2)\ne 1$. Hence, for every $n\ge 2$, $d_{simp}(a^n b^n; F_2)=2$, as required.
\end{proof}

\begin{remark}

One can alternatively argue in Case~1 in the above proof by relying on Lemma~\ref{lem:1} instead of Proposition~\ref{prop4}. Indeed, take $d=2$, and let $H$ be the subgroup of index $2$ in $F_2=F(a,b)$ provided by Lemma~\ref{lem:1} with the free basis $Y=\{y_0,y_1,y_2\}$ where $y_0=a^2$, $y_1=ab$, and $y_2=b^2$. Then $n=2k+1$ for $k\ge 1$ and $w_n=a^nb^n=(a^2)^kab(b^2)^k=y_0^ky_1y_2^k\in H$ is primitive in $H$ because it involves the generator $y_1$ exactly once. Hence, $d_{simp}(w_n;F_2)\le d_{prim}(w_n; F_2)\le 2$.
\end{remark}
\nocite{*}

\bibliographystyle{amsplain}

\bibliography{foo}

\end{document}